\newtheorem{theorem}{Theorem}[section]
\newtheorem{lemma}[theorem]{Lemma}
\newtheorem{conditions}[theorem]{Condition}
\newtheorem{conjecture}[theorem]{Conjecture}
\theoremstyle{definition}
\newtheorem{definition}[theorem]{Definition}
\newtheorem{claim}[theorem]{Claim}
\newtheorem*{claim*}{Claim}
\newcommand{\cB}{\mathcal{B}}
\newcommand{\cE}{\mathcal{E}}
\newcommand{\cF}{\mathcal{F}}
\newcommand{\cG}{\mathcal{G}}
\newcommand{\cH}{\mathcal{H}}
\newcommand{\bE}{\mathbb{E}}
\newcommand{\cL}{\mathcal{L}}
\newcommand{\cC}{\mathcal{C}}
\newcommand{\cQ}{\mathcal{Q}}
\newcommand{\bd}{\mathbf{d}}
\newcommand{\sm}{\setminus}
    \let\@fnsymbol\@arabic
\newcommand{\eps}{\varepsilon}
\newcommand{\bP}{\mathbb{P}}
\newcommand{\cM}{\mathcal{M}}
\newcommand{\bN}{\mathbb{N}}
\newtheoremstyle{case}{}{}{}{}{}{:}{ }{}
\theoremstyle{case}
\newcommand{\Bin}{\text{Bin}}
\newcommand{\aut}{\text{aut}}
\newcommand{\bbl}{\bigg(}
\newcommand{\bbr}{\bigg)}
\newcommand{\dtv}{d_{\text{TV}}}
\newcommand{\excess}{\text{excess}}
\author{Matthew Coulson\thanks{Department of Combinatorics \& Optimization, University of Waterloo, Canada. Email: matthew.coulson@uwaterloo.ca.}}
\begin{document}
\title{The strong component structure of the barely subcritical directed configuration model}

\maketitle
\begin{abstract}
We study the behaviour of the largest components of the directed configuration model in the barely subcritical regime.
We show that with high probability all strongly connected components in this regime are either cycles or isolated vertices and give an asymptotic distribution of the size of the $k$th largest cycle.
This gives a configuration model analogue of a result of \L{}uczak and Seierstad for the binomial random digraph.
\end{abstract}

\section{Introduction}\label{sec:intro}
\subsection{The directed configuration model}
Let $\bd_n= (\bd_n^-,\bd_n^+)=((d_1^-,d_1^+),\ldots,(d_n^-,d_n^+))$ be a directed degree sequence on $n$ vertices and let $m_n = \sum d_i^- = \sum d_i^+$.
The directed configuration model on $\bd_n$, introduced by Cooper and Frieze~\cite{CF}, is the random directed multigraph on $[n]$ obtained by associating with vertex $i$ $d_i^-$ in-stubs and $d_i^+$ in-stubs, and then choosing a perfect matching of in- and out- stubs uniformly at random.
We will denote this model by $G(\bd_n)$.
This is a directed generalisation of the configuration model of Bollob\'as~\cite{B} which since its introduction has become one of the most widely used random graph models.

A strongly connected component in a digraph is a maximal sub-digraph such that there exists a directed path between each ordered pair of vertices.
In this paper we will consider the size of the largest strongly connected component in the barely subcritical regime.
Note that these components are much smaller than the connected components which are found in the undirected case.
This is due to the fact that the minimal strongly connected components are directed cycles as opposed to the trees in the directed case and so we require one additional edge on the same vertex set.
We shall call any strongly connected digraph which is not a cycle complex.
Note that any such digraph has strictly more edges than vertices.

Let $n_{k,\ell}$ be the number of copies of $(k,\ell)$ in $\bd_n$ and let $\Delta_n = \max (d_i^-, d_i^+)$ be the maximum degree of $\bd_n$.
Also, define the following parameters of the degree distribution.
These parameters govern the behaviour of the size of the largest strongly connected component.
\begin{definition}
 \begin{align*}
  Q_n  := \frac{1}{m_n} \sum_{i=1}^n d_i^-d_i^+ - 1 && R_n^-  := \frac{1}{m_n} \sum_{i=1}^n d_i^-d_i^+(d_i^- -1) && R_n^+  := \frac{1}{m_n} \sum_{i=1}^n d_i^-d_i^+(d_i^+ -1) 
 \end{align*}
We shall assume the following conditions which ensure that the degree sequence is suitably ``well behaved''.
\begin{conditions}\label{cond:conditions}
For each $n$, $(d_i^-,d_i^+)_{i=1}^n = ((d_i^-,d_i^+)^{(n)})_{i=1}^n$ is a sequence of ordered pairs of non-negative integers such that $\sum_{i=1}^n d_i^- = \sum_{i=1}^n d_i^+$.
Furthermore, $(p_{i,j})_{i,j=1}^\infty$ is a probability distribution such that for some $\eps, \zeta >0$,
 \begin{enumerate}[i)]
  \item $n_{i,j}/n \to p_{i,j}$ as $n \to \infty$ for each $i,j \geq 0$,
  \item $m_n/n = \mu^{(n)} \to \mu = \sum_{i,j=1}^\infty i p_{i,j} = \sum_{i,j=1}^\infty j p_{i,j}$,
  \item $n_{0,0} =0$,
  \item $\sum_{i=1}^\infty n_{0,i} + n_{i,0} \leq (1-\eps)n$,
  \item $n_{1,1} \leq (1-\eps)n$,
  \item $\Delta_n \leq n^{1/6} \log^{-1/4}(n)$,
  \item $R_n^-, R_n^+ \geq \zeta$.
 \end{enumerate}
\end{conditions}

\end{definition}
It was shown by Cooper and Frieze~\cite{CF} that $Q_n = 0$ is the threshold for the existence of a giant strongly connected component under some mild conditions on the degree sequence similar to those observed in~\Cref{cond:conditions}. 

For the remainder of the paper, we shall omit the subscript $n$ for reasons of clarity - for example we write $\bd = \bd_n$ etc.
Let $\cC_k(\bd)$ be the size of the $k$th largest strongly connected component of the directed configuration model with degree sequence $\bd$.
We shall write $\cC_k$ for this quantity when the degree sequence is clear.

Our main result is the following,
\begin{theorem}\label{thm:subcrit_main_thm}
Suppose that $G(\bd)$ is a configuration model random digraph and suppose that $nQ^3(R^- R^+)^{-1} \to -\infty$.
 With high probability, there are no complex components or cycles of length $\omega(1/|Q|)$.
 Furthermore, 
 \begin{equation*}
  \bP\bbl|\cC_k| \geq \frac{\alpha}{|Q|}\bbr = 1 - \sum_{i=0}^{k-1} \frac{\xi_\alpha^i}{i!} e^{-\xi_\alpha} + o(1).
 \end{equation*}
where 
$$
\xi_{\alpha} = \int_{\alpha}^\infty \frac{e^{-x}}{x} dx.
$$
\end{theorem}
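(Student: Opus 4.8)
The plan is to reduce the problem to the statistics of short cycles in $G(\bd)$, since once we know that with high probability every strongly connected component is either an isolated vertex or a cycle of length $O(1/|Q|)$, the quantity $|\cC_k|$ is exactly the length of the $k$th longest cycle in the digraph. The proof therefore splits into two parts: (a) an upper bound argument showing no complex components and no long cycles exist whp, and (b) a distributional computation for the lengths of the short cycles, which should converge to a Poisson-type structure.

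For part (a), the natural approach is a first-moment (union bound) calculation. Using the standard configuration-model switching/matching estimates, the expected number of cycles of length $\ell$ in $G(\bd)$ is, to leading order, $\tfrac{1}{\ell}(1+Q)^{\ell}$ times lower-order corrections — essentially because each step of a directed cycle picks up a factor $\tfrac{1}{m}\sum_i d_i^- d_i^+ = 1+Q$ on average. Since $Q<0$ here, this behaves like $\tfrac{1}{\ell}e^{\ell Q}$ (up to $1+o(1)$ provided $\ell = o(m/\Delta^2)$, which is guaranteed by Condition~(vi) once $\ell = O(\log n /|Q|)$, say). Summing over $\ell = \omega(1/|Q|)$ shows the expected number of long cycles is $o(1)$, so whp there are none. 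For complex components, one bounds the expected number of strongly connected digraphs on $v$ vertices with $v+1$ or more edges; each extra edge beyond a cycle costs a factor of roughly $\Delta^2/m$ (or more carefully, uses $R^\pm$ and the excess), and the hypothesis $nQ^3(R^-R^+)^{-1}\to-\infty$ is precisely what makes this sum over all $v$ negligible. The main technical obstacle is making these first-moment estimates uniform over the whole relevant range of $\ell$ and $v$ with genuinely controlled error terms; the degree-sequence conditions, especially the maximum-degree bound (vi) and the lower bounds (vii) on $R^\pm$, are exactly the tools needed, and this is likely where the bulk of the work lies.

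For part (b), conditioned on the event from (a), we must understand the joint law of the number of cycles of each length. The strategy is the method of moments: compute the factorial moments of the number $Z_{>t}$ of cycles of length exceeding $t := \alpha/|Q|$, and show they converge to those of a Poisson random variable with mean $\lambda_\alpha := \sum_{\ell > t} \tfrac{1}{\ell} e^{\ell Q} \to \int_\alpha^\infty \tfrac{e^{-x}}{x}\,dx = \xi_\alpha$ (the Riemann-sum approximation using $|Q|\to 0$). One checks that the $r$th factorial moment of $Z_{>t}$ tends to $\lambda_\alpha^r$: the dominant contribution comes from $r$ vertex-disjoint cycles, and overlapping configurations contribute $o(1)$ because sharing even one vertex forces an extra matching constraint costing a factor $O(\Delta^2/m) = o(1)$ relative to the disjoint count. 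Poisson convergence then gives $\bP(Z_{>t} \geq k) \to 1 - \sum_{i=0}^{k-1} \tfrac{\xi_\alpha^i}{i!} e^{-\xi_\alpha}$, and since on the good event $\{|\cC_k| \geq t\}$ is exactly $\{Z_{>t'} \geq k\}$ for $t'$ just below $t$ — with the discrepancy between $\lceil \alpha/|Q|\rceil$ and $\alpha/|Q|$ contributing only $o(1)$ to $\xi_\alpha$ — this yields the claimed formula. The delicate point here is the interchange of the (asymptotic) Poisson limit with the cutoff at $\alpha/|Q|$: one must ensure the truncated sum $\sum_{\ell > \alpha/|Q|} \tfrac{1}{\ell} e^{\ell Q}$ really does converge to $\xi_\alpha$ uniformly enough, and that the contribution of cycles of length between $\alpha/|Q|$ and, say, $K/|Q|$ for large $K$ captures all but an $\eps$ of the mass, with the tail beyond $K/|Q|$ handled by the part-(a) estimate.

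In summary, I would first establish the structural dichotomy (no complex components, no long cycles) via first-moment bounds controlled by $nQ^3/(R^-R^+)$, then identify $|\cC_k|$ with the $k$th largest cycle length, and finally run a moment computation to prove Poisson convergence of the count of cycles longer than $\alpha/|Q|$ with the stated parameter $\xi_\alpha$. I expect the hardest part to be the uniform first-moment control in step (a) over the full range of component sizes — in particular ruling out complex components of all sizes simultaneously with error bounds that genuinely close under the hypothesis $nQ^3(R^-R^+)^{-1}\to-\infty$.
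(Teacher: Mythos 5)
Your part (a) has a genuine gap in the long-cycle range. The first-moment bound you invoke gives, for a cycle of length $\ell$, an expected count of order $\frac{m^{\ell}}{(m)_{\ell}}\frac{(1+Q)^{\ell}}{\ell}\le \frac{1}{\ell}\exp\bigl(\ell Q+O(\ell^{2}/m)\bigr)$, and this is only useful while $\ell^{2}/m\ll \ell|Q|$, i.e.\ $\ell=o(m|Q|)$; your stated condition $\ell=o(m/\Delta^{2})$ is not the binding one, and in any case cycles can have length up to $n\gg m|Q|$, where this upper bound is exponentially large (the crude step $\sum_{\text{distinct}}\prod d^{-}d^{+}\le (m(1+Q))^{\ell}$ throws away exactly the vertex-distinctness that keeps the true expectation small). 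So ``summing over $\ell=\omega(1/|Q|)$'' does not close. The paper splits cycles into medium ($g(n)<|C|<f(n)$ with $f(n)=o(m|Q|/R^{-})$), which are handled by the first moment exactly as you propose, and long ($|C|\ge f(n)$), which are handled by a completely different argument: an exploration of out-components whose drift $Q_t$ is shown to stay below $-|Q|/2$ via an Azuma--Hoeffding martingale bound, followed by optional stopping and Markov's inequality. Your complex-component step inherits the same problem if you really sum over all $v$; the paper instead first rules out cycles longer than $g(n)$ and then only needs to exclude the two small motifs $S(a,b,c)$ and $T(a,b)$ on at most $2g(n)$ vertices, which is where the hypothesis $nQ^{3}(R^{-}R^{+})^{-1}\to-\infty$ enters.

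For part (b), your method-of-moments plan is asserted at exactly the point where the configuration model is delicate: to get the $r$th factorial moment to converge to $\xi_\alpha^{r}$ you need matching \emph{lower} bounds, to $(1+o(1))$ precision, on the probability of finding prescribed disjoint families of cycles, not just the easy upper bounds; even for a single cycle the paper needs a switching argument (its Lemma~\ref{lem:cycles_lb}) with error terms controlled by Conditions~\ref{cond:conditions}, and the author explicitly states that the factorial moments cannot be controlled well enough under these hypotheses, which is why the paper uses the Chen--Stein local-coupling theorem instead: it requires only the first moment (pinned down two-sidedly as $\bE(W)=\xi_\alpha+o(1)$), upper bounds on pair counts via the enumeration of strongly connected multi-digraphs of maximum degree $4$ (Lemma~\ref{lem:count_scdegseq} and Lemma~\ref{lem:27k_cycle_union}), and a coupling of $G(\bd)$ with itself conditioned on a principal cycle built from switchings (Lemma~\ref{lem:bipswitching}). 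Your outline never addresses the lower-bound direction at all (it is also needed just to identify the Poisson mean), so as written both halves of the argument rest on estimates that are not justified in the regime where they are actually needed.
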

The condition $nQ^3(R^- R^+)^{-1} \to -\infty$ is likely best possible as the analogous condition in the undirected case discriminates the barely subcritical case from the critical window.
Moreover, substituting $Q, R^-$ and $R^+$ obtained from a typical degree sequence of $D(n,p)$ also enters the critical window when we have $nQ^3(R^- R^+)^{-1} \to c$ for some constant $c$. 

In prior work, \L{}uczak and Seierstad~\cite{LS} considered the model $D(n,p)$ which is a random digraph model formed by including each possible arc with probability $p$ independently.
They showed an analogous result to~\Cref{thm:subcrit_main_thm} for $p = (1-\eps)/n$ with $\eps = o(1)$ and $\eps^3n \to \infty$ in this model.
\subsection{Previous work}
The study of the giant component in random graph models was initiated by the seminal paper of Erd\H{o}s and R\'enyi~\cite{ER} regarding the giant component in $G(n,p)$.
Since then the appearance of a giant component in various models has remained an active topic of study in the area.

When working with directed graphs, there are a number of types of connected component which are of interest.
In this paper we concern ourselves with the strongly connected components.
The study of strongly connected components in random digraphs began in the model $D(n,p)$ where we include each possible edge with probability $p$ independently.
Karp~\cite{K} and \L{}uczak~\cite{L} independently showed that when $p=c/n$, then $D(n,p)$ has all strongly connected components of size $O(1)$ if $c<1$.
If instead $c>1$ they showed that there exists a unique strongly connected component of linear order with all other components of size $O(1)$.
The case $p=(1+\eps)/n$ with $\eps =o(1)$ and $|\eps|^3 n \to \infty$ was studied by \L{}uczak and Seierstad~\cite{LS}.
They showed that if $\eps^3 n \to \infty$, there is a unique strongly connected component of size $4\eps^2 n$ and other components all of size $O(1/\eps)$.
When $\eps^3n \to -\infty$ they showed an analogue of~\Cref{thm:subcrit_main_thm}:
\begin{theorem}
 Let $np = 1-\eps$ where $\eps \to 0$ but $\eps^3n \to -\infty$. Assume $\alpha>0$ is a constant and let $X_k$ denote the size of the $k$th largest strongly connected component. Then, asymptotically almost surely, $D(n,p)$ contains no complex components and
 $$
 \lim_{n\to \infty} \bP(X_k \geq \alpha/\eps) = 1- \sum_{i=0}^{k-1} \frac{\xi_\alpha^i}{i!}e^{-\xi_\alpha},
 $$
 where $\xi_\alpha = \int_\alpha^\infty \frac{e^{-x}}{x} dx$.
\end{theorem}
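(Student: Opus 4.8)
The plan is to deduce the statement from \Cref{thm:subcrit_main_thm} via the standard correspondence between $D(n,p)$ and the directed configuration model. Write $p=(1-\eps)/n$ and let $\bd=((d_i^-,d_i^+))_{i=1}^n$ be the (random) bidegree sequence of $D(n,p)$. Since each of the $n(n-1)$ possible arcs is present independently with probability $p$, conditioned on $\bd$ the digraph $D(n,p)$ is uniform over all simple digraphs with bidegree sequence $\bd$; the configuration model $G(\bd)$ conditioned on being simple has the same distribution, because each simple digraph with bidegree $\bd$ arises from exactly $\prod_i d_i^-!\,\prod_i d_i^+!$ stub-matchings. Vertices with $d_i^-=d_i^+=0$ are isolated and lie on no strongly connected component of size $\geq 2$, so after deleting them we obtain a bidegree sequence $\bd^*$ on $n^*=n-n_{0,0}$ vertices such that $D(n,p)$ restricted to its non-isolated vertices has the law of $G(\bd^*)$ conditioned on simplicity. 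As $\alpha/\eps\to\infty$, the event $\{X_k\geq\alpha/\eps\}$ only involves components of size $\geq 2$, so it suffices to control $|\cC_k(\bd^*)|$.

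First I would check that a.a.s.\ $\bd^*$ satisfies \Cref{cond:conditions} and the hypothesis of \Cref{thm:subcrit_main_thm}. The in- and out-degrees are $\mathrm{Bin}(n-1,p)$, and as $\eps\to0$ their joint law tends to a product of two $\mathrm{Poisson}(1)$ laws, so $n_{i,j}/n\to e^{-2}/(i!j!)$; renormalising by $n^*/n\to 1-e^{-2}$ gives conditions (i), (ii), (iv), (v) with room to spare (e.g.\ the fraction of vertices with both coordinates positive stays bounded away from $0$ and $1$), while (iii) holds by construction and (vi) holds a.a.s.\ since $\Delta_n=O(\log n/\log\log n)$. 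For the remaining quantities, $\bE\sum_i d_i^-d_i^+ = n((n-1)p)^2$ and the relevant sums concentrate (mean $\Theta(n)$, variance $O(n\,\mathrm{polylog}\,n)$, as degrees of distinct vertices are at most $O(1/n)$-correlated), so a.a.s.
\[
Q_{\bd^*}=-\eps\,(1+o(1)),\qquad R^-_{\bd^*},\ R^+_{\bd^*}=1+o(1),
\]
using $\eps\gg n^{-1/3}\gg n^{-1/2}\,\mathrm{polylog}\,n$; in particular (vii) holds and $n^*Q_{\bd^*}^3(R^-_{\bd^*}R^+_{\bd^*})^{-1}=-(1-e^{-2})n\eps^3(1+o(1))\to-\infty$. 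The dependence of the $o(1)$ error in \Cref{thm:subcrit_main_thm} on the choice of $\bd^*$ is removed by a subsequence argument, since the limiting degree law is the same ($\mathrm{Poisson}(1)^{\otimes 2}$, suitably renormalised) along every subsequence of good degree sequences.

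Applying \Cref{thm:subcrit_main_thm} to $G(\bd^*)$ now gives that a.a.s.\ it has no complex component and no cycle of length $\omega(1/|Q_{\bd^*}|)=\omega(1/\eps)$, and that $\bP(|\cC_k(\bd^*)|\geq\alpha/|Q_{\bd^*}|)=1-\sum_{i=0}^{k-1}\frac{\xi_\alpha^i}{i!}e^{-\xi_\alpha}+o(1)$. Since $|Q_{\bd^*}|/\eps\to1$ a.a.s., for each fixed $\delta>0$ we have a.a.s.\ the inclusions $\{|\cC_k|\geq(\alpha+\delta)/|Q_{\bd^*}|\}\subseteq\{|\cC_k|\geq\alpha/\eps\}\subseteq\{|\cC_k|\geq(\alpha-\delta)/|Q_{\bd^*}|\}$; taking probabilities, letting $n\to\infty$ and then $\delta\to0$, and using continuity of $\alpha\mapsto\sum_{i=0}^{k-1}\frac{\xi_\alpha^i}{i!}e^{-\xi_\alpha}$, we recover the claimed limit with $\alpha/|Q_{\bd^*}|$ replaced by $\alpha/\eps$.

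It remains to transfer these conclusions from $G(\bd^*)$ to $G(\bd^*)$ conditioned on simplicity, hence to $D(n,p)$, and this is the main obstacle. Because the degree distributions have uniformly bounded second moments, a.a.s.\ $\bP(G(\bd^*)\text{ simple})\to c>0$ (the numbers of loops and of parallel arcs have $O(1)$ means), so every a.a.s.\ property of $G(\bd^*)$ — no complex components, no long cycles — remains a.a.s.\ after conditioning on simplicity. The distributional statement needs more, since conditioning on a constant-probability event need not preserve a limit in $(0,1)$; I would handle it by showing the strongly connected component profile of $G(\bd^*)$ is asymptotically independent of the simplicity event. Concretely, by \Cref{thm:subcrit_main_thm} all components are cycles or singletons, so $|\cC_k|$ is determined by the multiset of cycle lengths, and one checks by a joint moment computation that the number of cycles of length $\geq\alpha/\eps$ and the (jointly $O_p(1)$) number of loops and parallel arcs converge to independent limits; equivalently, an explicit sequence of random switchings removing the defects a.a.s.\ creates and destroys no cycle of length $\geq\alpha/\eps$. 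Combined with $\bP(G(\bd^*)\text{ simple})\to c$ this yields $\bP(D(n,p):X_k\geq\alpha/\eps)=\bP(|\cC_k(\bd^*)|\geq\alpha/\eps)+o(1)$, finishing the proof. Alternatively one could bypass \Cref{thm:subcrit_main_thm} and argue directly in $D(n,p)$ by the method of moments for the number of cycles of length in $[\alpha/\eps,\infty)$ — whose mean is $\int_\alpha^\infty e^{-x}/x\,dx+o(1)=\xi_\alpha$, with Poisson limit — together with the input that rules out complex strongly connected components, but this essentially re-proves a special case of \Cref{thm:subcrit_main_thm}.
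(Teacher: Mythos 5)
You should first note that the paper does not prove this statement at all: it is quoted verbatim as Theorem~7 of \L{}uczak and Seierstad~\cite{LS}, given as background, and the original proof works directly in $D(n,p)$ (moment computations for cycle counts plus a direct argument excluding complex components), not through the configuration model. Your route --- condition on the bidegree sequence, observe that $D(n,p)$ given its degrees is the configuration model conditioned on simplicity, verify \Cref{cond:conditions} and $n Q^3(R^-R^+)^{-1}\to-\infty$ for the (truncated) random degree sequence, and then invoke \Cref{thm:subcrit_main_thm} --- is therefore a genuinely different derivation, and the reduction framework itself is sound: the uniformity of $D(n,p)$ given its degrees, the removal of $(0,0)$-vertices, the a.a.s.\ estimates $Q=-\eps(1+o(1))$, $R^\pm\to1$, $\Delta=O(\log n/\log\log n)$, and the fact that $\eps\gg n^{-1/3}\gg n^{-1/2}\,\mathrm{polylog}(n)$ are all correct and would buy you the $D(n,p)$ theorem as a corollary of the paper's main result rather than as an independent input.

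However, two steps you flag but do not carry out are exactly where the work lies, and as written they are gaps. First, the transfer of the distributional statement through conditioning on simplicity: the paper's proof of \Cref{lem:kth_cyc} gives only $\dtv(W,\mathrm{Po}(\xi_\alpha))=o(1)$ for the unconditioned model, and a total-variation bound does not survive conditioning on an event of probability $c\in(0,1)$; you need joint convergence of (number of loops, number of parallel arcs, number of cycles of length $\geq\alpha/\eps$) to independent limits, or a contiguity/switching argument. This cannot be read off from the paper's Chen--Stein computation (which is one-dimensional and was chosen precisely because full factorial-moment control is unavailable under \Cref{cond:conditions}); for the near-Poisson degrees arising from $D(n,p)$ it is provable, but it is a separate moment or switching computation of comparable length to the paper's Section~4, not a remark. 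Second, \Cref{thm:subcrit_main_thm} is stated for a deterministic sequence of degree sequences satisfying \Cref{cond:conditions}, including exact convergence $n_{i,j}/n\to p_{i,j}$; a realized random degree sequence does not converge, so your one-line ``subsequence argument'' needs to be made precise (e.g.\ a diagonal argument over degree sequences lying in shrinking neighbourhoods of the $\mathrm{Poisson}(1)^{\otimes2}$ law, with the $o(1)$ in the theorem uniform over such sequences --- a uniformity the theorem does not state and which you would have to extract from its proof). Your closing alternative --- method of moments directly in $D(n,p)$ --- is indeed viable and is essentially the original \cite{LS} argument, but then nothing is being deduced from the paper.
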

The so-called critical window, when $p = (1 + \lambda n^{-1/3})/n$ has also been the subject of some study.
In~\cite{MC} the author showed bounds on the size of the largest strongly connected component in this regime which are akin to bounds obtained by Nachmias and Peres for the undirected case~\cite{NP}.
Moreover, Goldschmidt and Stephenson~\cite{GS} gave a scaling limit result for the largest strongly connected components in the critical window.

The directed configuration model has also been studied previously.
It was first studied by Cooper and Frieze who showed that provided the maximum degree $\Delta < n^{1/12}/\log(n)$ then  if $Q_n<0$, there is no all strongly connected components are small and if $Q_n>0$ there is a giant strongly connected component of linear size.
The assumptions on the degree sequence have subsequently been relaxed, Graf~\cite{G} showed $\Delta < n^{1/4}$ is enough to draw the same conclusion and Cai and Perarnau~\cite{XG} improved this further to only require bounded second moments.
A scaling limit result has been recently obtained at the exact point of criticality, $Q_n = 0$ by Donderwinkel and Xie~\cite{DX} in a very closely related model where vertices' degrees are sampled from a limiting degree sequence.
\subsection{Organization}
The remainder of this paper is arranged as follows, in~\cref{sec:sg_counting} we prove some auxiliary results on subgraph counting within the directed configuration model. Then, in~\cref{sec:counting} we enumerate certain types of strongly connected directed graphs of maximum degree $4$.
In~\cref{sec:proof} we prove~\Cref{thm:subcrit_main_thm} which we break down into a few steps: first that there are no long cycles, next that there are no complex components, and finally we compute the probability the $k$th largest component has size at least $\alpha / |Q|$ via Poisson approximation.
We conclude the paper in~\cref{sec:concl} with some open questions and future work.

\section{Subgraph Bounds}\label{sec:sg_counting}
We calculate probabilities of given subgraphs in the configuration model.

Suppose that $G(\bd)$ is a configuration model random graph and let $D=(D_-,D_+)$ be the degree of a uniformly random vertex of $G(\bd)$.
We call $D$ the degree distribution of $G(\bd)$ (or of $\bd$).
Define
\begin{align*}
 \mu_{i,j} = \bE(D_-^i D_+^j), && \text{and} && \rho_{i,j} = \bE\bigg(\prod_{k=0}^{i-1}(D_--k) \prod_{\ell=0}^{j-1}(D_+-\ell)\bigg).
\end{align*}
So that the $\mu_{i,j}$ and $\rho_{i,j}$ are the moments and factorial moments of $(D_-,D_+)$ respectively.
We also define $\mu = \mu_{1,0} = \mu_{0,1}$ which is the average degree.
Furthermore, observe that $\mu_{1,1} = \frac{m}{n}(1+Q)$ which is a fact we utilise in subsequent sections.
We now state a general upper bound on the probability of finding certain subgraphs in the configuration model.

\begin{lemma}\label{lem:subgraphs_ub}
 Let $G(\bd)$ be a configuration model random digraph with degree distribution $(D_-,D_+)$. Suppose further that $G(\bd)$ has $n$ vertices and $m$ edges. Let $H$ be any digraph with $h$ vertices, $k$ edges and let $h_{i,j}$ be the number of vertices of $H$ with in-degree $i$ and out degree $j$.
 Then the probability that a uniformly random injective map $\phi:V(H) \to V(G(\bd))$ is a homomorphism is bounded above by
 \begin{equation}\label{eq:phg+}
 p^+(H,G(\bd)) := \frac{n^h}{(n)_h(m)_k} \prod_{i,j \in\bN} \rho_{i,j}^{h_{i,j}}
\end{equation}
 Furthermore, the expected number of copies of $H$ in $G(\bd)$ is bounded above by
 \begin{equation}\label{eq:ehg+}
 n^+(H,G(\bd)) := \frac{h!}{\aut(H)} \binom{n}{h} p^+(H,G(\bd)).
 \end{equation}
\end{lemma}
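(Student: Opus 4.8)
The plan is to bound the probability that a fixed injective map $\phi\colon V(H)\to V(G(\bd))$ is a homomorphism by revealing the perfect matching of stubs edge by edge, and then to pass from this to the expected number of copies of $H$ by a standard counting identity. First I would fix the injective map $\phi$ and an arbitrary ordering $e_1,\dots,e_k$ of the edges of $H$. For $\phi$ to be a homomorphism, each directed edge $e_t = (u,v)$ of $H$ must be realised by a matched pair of stubs: an out-stub at $\phi(u)$ paired with an in-stub at $\phi(v)$. Exposing the matching one pair at a time, when we come to edge $e_t$ the vertex $\phi(u)$ has $d^+_{\phi(u)}$ out-stubs in total, of which some number have already been consumed by earlier edges of $H$ incident to $u$; if $u$ has already had $s$ of its out-edges (in the ordering) processed, there are $d^+_{\phi(u)} - s$ remaining out-stubs at $\phi(u)$, and similarly $d^-_{\phi(v)} - r$ remaining in-stubs at $\phi(v)$ if $v$ has had $r$ of its in-edges processed. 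There are at least $m - k$ unmatched in-stubs overall at this point, so conditioned on everything revealed so far the probability that this particular out-stub of $\phi(u)$ gets matched to an in-stub of $\phi(v)$ is at most $(d^+_{\phi(u)} - s)(d^-_{\phi(v)} - r)/(m-k)$. Taking the product over $t = 1,\dots,k$ and grouping the factors by vertex, each vertex $w = \phi(x)$ of $H$ with in-degree $i$ and out-degree $j$ in $H$ contributes $\prod_{\ell=0}^{j-1}(d^+_w - \ell)\prod_{\ell=0}^{i-1}(d^-_w - \ell)$, and the denominators contribute $\prod_{t=0}^{k-1}(m-t)^{-1} = 1/(m)_k$. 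Hence, conditioning on $\phi$, the probability that $\phi$ is a homomorphism is at most $\frac{1}{(m)_k}\prod_{x\in V(H)}\prod_{\ell=0}^{j_x-1}(d^+_{\phi(x)} - \ell)\prod_{\ell=0}^{i_x-1}(d^-_{\phi(x)} - \ell)$.

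It remains to average over the uniformly random injective $\phi$. Since $\phi$ is uniform over injections $V(H)\to[n]$, the images $\phi(x)$ for $x\in V(H)$ are, as an ordered tuple, a uniformly random tuple of $h$ distinct vertices; I would bound the (negatively correlated) product of the per-vertex factorial-moment expressions by the product of their individual expectations against a single uniformly random vertex, namely $\rho_{i_x,j_x}$. More carefully: $\bE\big[\prod_{x} g_x(\phi(x))\big]$ where the product runs over distinct vertices is at most $\prod_x \bE[g_x(V)]$ for $V$ a uniform vertex, because sampling without replacement is negatively associated for these nonnegative coordinate functions (or, concretely, because $\frac{(n-h)!}{n!}\sum_{\text{distinct }v_1,\dots,v_h}\prod_x g_x(v_x) \le \frac{1}{n^h}\sum_{v_1,\dots,v_h}\prod_x g_x(v_x) = \prod_x \bE[g_x(V)]$ once one checks the sum over distinct tuples is at most the sum over all tuples scaled appropriately — this is where the $n^h/(n)_h$ correction factor in \eqref{eq:phg+} comes from). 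Collecting by the degree type, $\prod_x \rho_{i_x,j_x} = \prod_{i,j\in\bN}\rho_{i,j}^{h_{i,j}}$, and inserting the $n^h/(n)_h$ factor yields exactly $p^+(H,G(\bd))$ as in \eqref{eq:phg+}.

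For the second bound, the number of copies of $H$ in $G(\bd)$ equals $\frac{1}{\aut(H)}$ times the number of injective homomorphisms $V(H)\to V(G(\bd))$, and the number of injective maps $V(H)\to[n]$ is $(n)_h = h!\binom{n}{h}$. Hence the expected number of copies is $\frac{(n)_h}{\aut(H)}\cdot\bP(\phi\text{ is a homomorphism}) \le \frac{h!}{\aut(H)}\binom{n}{h}\,p^+(H,G(\bd))$, which is \eqref{eq:ehg+}. (Here I am using that a copy of $H$ as a subdigraph may use an edge in the same direction only, so each copy is counted $\aut(H)$ times among the injective homomorphisms; if $G(\bd)$ has multi-edges one should be slightly careful, but this only helps, as two parallel edges give more, not fewer, ways to embed, and the upper bound via stub-matching already accounts for this.)

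The main obstacle is the second paragraph: justifying that the expectation of the product of factorial-moment functions over distinct random vertices is dominated by the product of expectations, and tracking the precise $n^h/(n)_h$ correction. The stub-revealing bound in the first paragraph is essentially bookkeeping — the only subtlety there is to be sure the denominator is always at least $m-k$ (true since at most $k-1 < k$ stubs have been matched before the final edge), so that $1/(m)_k$ is a valid upper bound on the product of conditional probabilities. Once the negative-association/counting inequality is in place, assembling \eqref{eq:phg+} and \eqref{eq:ehg+} is immediate.
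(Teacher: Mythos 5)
Your proposal is correct and follows essentially the same route as the paper: expose the matching edge by edge, bound each conditional probability by $(d^+-s)(d^--r)$ over the number of remaining stubs to get $1/(m)_k$ times per-vertex falling factorials, then relax the sum over injections to the sum over all maps so it factorizes over vertices into the $\rho_{i,j}$, at the cost of the $n^h/(n)_h$ factor. Two cosmetic slips worth fixing: the displayed inequality $\frac{(n-h)!}{n!}\sum_{\text{distinct}}\le\frac{1}{n^h}\sum_{\text{all}}$ is false as written (the correct relaxation produces the extra $n^h/(n)_h$, as you note parenthetically), and the claim that $1/(m)_k$ is justified because each denominator is ``at least $m-k$'' is the wrong reason --- the point is that at step $t$ the denominator is exactly $m-t+1$, exactly as your first paragraph's accounting already uses.
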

\begin{proof}
 Note that~\eqref{eq:ehg+} follows immediately from~\eqref{eq:phg+}.
 Thus we shall focus on the proof of~\eqref{eq:phg+}.
Let $\psi:V(H) \to V(G(\bd))$ be a fixed injective map. 
Arbitrarily order the edges of $H$ as $E_1, E_2, \ldots, E_k$ and for each edge $E_i$ define an event $\cE_i:=\{\psi(E_i) \in E(G(\bd))\}$.
Then, $\psi$ is a homomorphism if and only if every event $\cE_1, \ldots, \cE_k$ occurs.
To simplify notation, let $\cF_1 = \cE_1$ and $\cF_i = \cE_i | \cap_{j=1}^{i-1} \cE_j$ for $i \geq 2$ and note that
$$
\bigcap_{i=1}^k \cE_i = \bigcap_{i=1}^k \cF_i.
$$
Suppose that $E_i = a_ib_i$ for some $a_i,b_i\in V(H)$.
Define $s_i = |\{j<i:a_i=a_j \}|$ and $t_i = |\{j<i:b_i=b_j\}|$.
That is, $s_i$ and $t_i$ are the number of times that $a_i$ (resp. $b_i$) has previously appeared as the initial (terminal) vertex of an edge of $H$.
Also, suppose that $\psi(E_i) = a_i'b_i'$.

\begin{claim}
 $$
 \bP(\cF_i) \leq  \frac{\min(d^+(a_i')-s_i,0) \min (d^-(b_i')-t_i,0)}{m+1-i}
 $$
\end{claim}
\begin{proof}
To see this, note that if $d^+(a_i')\leq s_i$, then $\bP(\cF_i)=0$ as there are not enough stubs at $a_i'$ to create such a copy of $H$.
Similarly if $d^-(b_i')\leq t_i$, $\bP(\cF_i)=0$.

So, without loss of generality we may assume that $(d^+(a_i')-s_i), (d^-(b_i')-t_i)>0$.
Now, by definition of $\cF_i$ we have already chosen $i-1$ edges of the configuration model and we have precisely $d^+(a_i')-s_i$ out-stubs remaining at $a_i'$ and $d^-(b_i')-t_i$ at $b_i'$.
Now, consider the random matching on the $2(m+1-i)$ remaining stubs.
The probability that a specified pair of stubs is matched is $(m+1-i)^{-1}$.
There are $(d^+(a_i')-s_i)(d^-(b_i')-t_i)$ such pairs which produce $a_i'b_i'$.
Thus the expected number of edges from $a_i'$ to $b_i'$ is $(d^+(a_i')-s_i)(d^-(b_i')-t_i)/(m+1-i)$.
The claim then follows by Markov's inequality.
\end{proof}
Next we compute the probability that all of the $\cF_i$ occur simultaneously.
Due to the way in which we defined these events,
$$
\bP(\psi\text{ is a homomorphism}) = \bP\bigg(\bigcap_{i=1}^k \cF_i\bigg) = \prod_{i=1}^k \bP(\cF_i).
$$
To write down this probability succinctly, we will use the functions 
$$
f_{a,b}(x,y):= \prod_{i=1}^a \prod_{j=1}^b (x+1-i)(y+1-j)
$$
It is a simple computation to check that
\begin{equation}\label{eq:p_fixed_hom}
 \bP\bigg(\bigcap_{i=1}^k \cF_i\bigg) \leq ((m)_k)^{-1} \prod_{v \in V(H)} f_{d_H^-(v),d_H^+(v)}(d_{G(\bd)}^-(\psi(v)),d_{G(\bd)}^+(\psi(v)))
\end{equation}
Note that here we were able to remove the $\min$ function as if there are any negative contributions to the product, then there is also a contribution of value $0$ and furthermore, it is impossible for $\psi$ to be a homomorphism in this case so that~\eqref{eq:p_fixed_hom} reduces to $0 \leq 0$ in this case (which is clearly true).

To complete the proof, we extend the right hand side of equation~\eqref{eq:p_fixed_hom} to allow any function $\psi:V(H) \to V(G(\bd))$.
Choosing an uniformly random function in this way gives that the probability that a uniformly random injective function is a homomorphism is bounded above by
\begin{align}
 &\; \frac{1}{(n)_h(m)_k} \sum_{\psi:V(H) \hookrightarrow V(G(\bd))} \prod_{v \in V(H)} f_{d_H^-(v),d_H^+(v)}(d_{G(\bd)}^-(\psi(v)),d_{G(\bd)}^+(\psi(v))) \label{eq:rand_inj}\\
 \leq &\; \frac{1}{(n)_h(m)_k} \sum_{\psi:V(H) \to V(G(\bd))} \prod_{v \in V(H)} f_{d_H^-(v),d_H^+(v)}(d_{G(\bd)}^-(\psi(v)),d_{G(\bd)}^+(\psi(v)))\label{eq:rand_fun}
\end{align}
In moving from injective functions to arbitrary functions between~\eqref{eq:rand_inj} and~\eqref{eq:rand_fun}, we move from an intractable space of functions to a product space which naturally splits over the vertices of $H$.
This allows us to rewrite~\eqref{eq:rand_fun} as
\begin{align*}
& \frac{1}{(n)_h(m)_k} \sum_{\psi:V(H) \to V(G(\bd))} \prod_{v \in V(H)} f_{d_H^-(v),d_H^+(v)}(d_{G(\bd)}^-(\psi(v)),d_{G(\bd)}^+(\psi(v))) \\
= & \frac{1}{(n)_h(m)_k} \prod_{v \in V(H)} \sum_{w \in V(G(\bd))} f_{d_H^-(v),d_H^+(v)}(d_{G(\bd)}^-(w),d_{G(\bd)}^+(w)) \\
= & \frac{n^h}{(n)_h(m)_k} \prod_{v \in V(H)} \rho_{d_H^-(v),d_H^+(v)} = \frac{n^h}{(n)_h(m)_k} \prod_{i,j \in\bN} \rho_{i,j}^{h_{i,j}},
\end{align*}
which is the claimed upper bound, $p^+(H,G(\bd))$.
\end{proof}
We also need a lower bound in the case that $H$ is a cycle.
 \begin{lemma}\label{lem:cycles_lb}
  Let $G(\bd)$ be a configuration model random digraph whose degree sequence $\bd$ satisfies~\Cref{cond:conditions}. Suppose further that $G(\bd)$ has $n$ vertices and $m$ edges. Let $H$ be a directed cycle with $h$ vertices.
 Then the probability that a uniformly random injective map $\phi:V(H) \to V(G(\bd))$ is a homomorphism is bounded from below by
 \begin{equation}\label{eq:phg_cycle_lb}
 p_c^-(H,G(\bd)) := \frac{(1+Q)^h}{(n)_h} \left( 1 - \frac{2h^2\Delta^2}{\eps n} \right) \left( 1 - \frac{h \Delta^2}{2m}\right),
 \end{equation}
 where the $\eps$ in~\eqref{eq:phg_cycle_lb} comes from~\Cref{cond:conditions}
\end{lemma}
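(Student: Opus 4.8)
The plan is to fix the injective map first and average over it only at the very end. So I would fix pairwise distinct vertices $w_1,\dots,w_h$ of $G(\bd)$ and identify $\phi$ with the assignment sending the vertices of the cycle $H$ in cyclic order to $w_1,\dots,w_h$; then $\phi$ is a homomorphism exactly when the arcs $w_1w_2,w_2w_3,\dots,w_hw_1$ are all present in $G(\bd)$, which in particular forces $d^-(w_i),d^+(w_i)\ge1$ for every $i$. For such a tuple let $X$ count the ways to realise the cycle, i.e.\ the number of tuples $(p_1,\dots,p_h)$ in which $p_i$ is a pair consisting of an out-stub at $w_i$ and an in-stub at $w_{i+1}$ (indices modulo $h$) and every $p_i$ is an arc of $G(\bd)$. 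Since the directed configuration model is a uniformly random bijection from the $m$ out-stubs to the $m$ in-stubs, any fixed family of $\ell$ pairwise disjoint stub-pairs occurs with probability exactly $1/(m)_\ell$; since the $h$ pairs in any tuple automatically use $h$ distinct out-stubs and $h$ distinct in-stubs, this gives
\begin{equation*}
\bE X=\frac{1}{(m)_h}\prod_{i=1}^{h}d^+(w_i)\,d^-(w_i),
\end{equation*}
and $\phi$ is a homomorphism precisely on the event $\{X\ge1\}$.

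Next I would lower bound $\bP(X\ge1)$ by the Bonferroni inequality $\bP(X\ge1)\ge\bE X-\tfrac12\bE X(X-1)$ and estimate the second moment. Grouping ordered pairs of distinct realisations by the set $S$ of coordinates where they differ, the key observation is that if both realisations are present then on each $i\in S$ the two pairs must differ in \emph{both} coordinates (a bijective matching cannot map one out-stub to two in-stubs), so jointly they occupy $h+|S|$ out-stubs and $h+|S|$ in-stubs and occur with probability $1/(m)_{h+|S|}$; the number of such pairs with $|S|=s$ is at most $\binom{h}{s}\Delta^{2s}\prod_i d^+(w_i)d^-(w_i)$. Since $(m)_h/(m)_{h+s}\le(m-2h)^{-s}$ for $s\le h$, this yields
\begin{equation*}
\bE X(X-1)\le\bE X\sum_{s=1}^{h}\binom{h}{s}\frac{\Delta^{2s}}{(m-2h)^{s}}=\bE X\left(\Bigl(1+\tfrac{\Delta^{2}}{m-2h}\Bigr)^{h}-1\right),
\end{equation*}
and after absorbing lower-order terms $\bP(X\ge1)\ge\bE X\bigl(1-\tfrac{h\Delta^{2}}{2m}\bigr)$.

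Finally I would average over the injective maps. A vertex with $d^+(v)=0$ or $d^-(v)=0$ contributes $0$ both to $\bP(X\ge1)$ and to $\sum_v d^+(v)d^-(v)$, so by inclusion--exclusion over which two coordinates coincide together with $\sum_v(d^+(v)d^-(v))^2\le\Delta^2\sum_v d^+(v)d^-(v)$,
\begin{equation*}
\sum_{(w_1,\dots,w_h)\ \text{distinct}}\ \prod_{i=1}^{h}d^+(w_i)d^-(w_i)\ \ge\ \Bigl(\sum_{v} d^+(v)d^-(v)\Bigr)^{h}\Bigl(1-\tfrac{\binom{h}{2}\Delta^{2}}{\sum_{v} d^+(v)d^-(v)}\Bigr).
\end{equation*}
Now $\sum_v d^+(v)d^-(v)=n\mu_{1,1}=m(1+Q)$, and this quantity is at least $\eps n$ because by~\Cref{cond:conditions} (items iii and iv) at least $\eps n$ vertices have both degrees positive. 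Combining these estimates with $(m)_h\le m^h$ gives
\begin{equation*}
\bP(\phi\text{ is a homomorphism})=\frac{1}{(n)_h}\sum_{(w_1,\dots,w_h)\ \text{distinct}}\bP(X\ge1)\ \ge\ \frac{(1+Q)^h}{(n)_h}\Bigl(1-\tfrac{2h^2\Delta^2}{\eps n}\Bigr)\Bigl(1-\tfrac{h\Delta^2}{2m}\Bigr),
\end{equation*}
which is exactly $p_c^-(H,G(\bd))$; if either correction factor is non-positive the inequality is trivial since the left-hand side is a probability.

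The step I expect to be the main obstacle is the control of $\bE X(X-1)$: one must track precisely which stubs two overlapping cycle-realisations can share — in particular the fact that a genuine disagreement on a coordinate forces disagreement in both stub slots there — and then verify that the resulting sum is of order $h\Delta^2/m$, hence negligible against the $1/(n)_h$-sized main term. The remaining ingredients (the falling-factorial inequalities, the inclusion--exclusion over repeated vertices, and the appeal to~\Cref{cond:conditions}) are routine bookkeeping.
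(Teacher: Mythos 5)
Your argument is correct in substance but follows a genuinely different route from the paper at both of its key steps. For the factor $1-\tfrac{h\Delta^2}{2m}$, the paper fixes the injection and reveals the cycle edge by edge, using the conditional estimate $\bP(\text{no }u\to v\text{ edge}\mid\mathcal{H})\le(1-a/m)^b\le 1-\tfrac{ab}{m}+\tfrac{a^2b^2}{2m^2}$ to lower-bound each edge probability and then multiplying; you instead count stub-level realisations of the vertex cycle, compute $\bE X$ exactly, and apply Bonferroni with a second-factorial-moment bound. Your observation that two distinct co-occurring realisations must disagree in \emph{both} stub slots at every differing coordinate is exactly the right point and makes the $1/(m)_{h+s}$ probabilities correct. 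For the passage from the sum over injective tuples to $\bigl(\sum_v d^-(v)d^+(v)\bigr)^h=m^h(1+Q)^h$, the paper runs a switching argument between the classes $\Phi_i$ of maps with image deficiency $i$, whereas you use a one-line union bound over coincident coordinates together with $\sum_v(d^+d^-)^2\le\Delta^2\sum_v d^+d^-$ and $\sum_v d^+d^-\ge\eps n$ (which follows from Conditions~iii and~iv exactly as you say). Your version of this step is simpler and gives the stronger factor $1-\tfrac{h^2\Delta^2}{2\eps n}$ in place of $1-\tfrac{2h^2\Delta^2}{\eps n}$.

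One small point of bookkeeping: the displayed second-moment bound gives a correction whose leading term is $\tfrac{h\Delta^2}{2(m-2h)}$, which is slightly \emph{larger} than the $\tfrac{h\Delta^2}{2m}$ you then write down, so the inequality $\bP(X\ge1)\ge\bE X\bigl(1-\tfrac{h\Delta^2}{2m}\bigr)$ does not follow verbatim. Since the discrepancy is a $\bigl(1+O(h/m)\bigr)$ multiplicative factor on the correction (plus genuinely higher-order terms from $s\ge2$), and since you have a spare factor of $4$ in the other bracket, the stated bound $p_c^-(H,G(\bd))$ is still recovered in the regime where it is non-trivial; but you should either carry the $m-2h$ through or explicitly trade it against the slack in the $\eps n$ factor rather than absorbing it silently.
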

\begin{proof}
First, let us consider the probability of finding at least one edge from vertex $u$ of out-degree $a$ to vertex $v$ with in-degree $b$.
 Let $\cH_k$ be our knowledge of $G(\bd)$ up until now which consists only of a set of $k$ edges which are present.
$\cH_k$ satisfies that we have not observed any edges that affect either the out-degree of $u$ or the in-degree of $v$ and that $G(\bd)$ has $m$ edges.
So let $X$ be the number of edges between $u$ and $v$.
Then,
\begin{align*}
 \bP(X=0|\cH_k) = &\, \bbl 1-\frac{a}{m-k} \bbr\bbl 1-\frac{a}{m-k-1} \bbr \ldots \bbl 1-\frac{a}{m+1-k-b} \bbr \\
 \leq &\, \bbl 1 - \frac{a}{m} \bbr^b \\
 \leq &\, 1 - \frac{ab}{m} + \frac{a^2 b^2}{2m^2}.
\end{align*}
where the final line follows by the inequality, $(1+x)^n \leq 1-nx+\binom{n}{2} x^2$ which is valid for $n \in \bN$ and $x \geq -1$.
This allows us to deduce that the probability of seeing an edge between $u$ and $v$ is at least $\frac{ab}{m} - \frac{a^2 b^2}{2m^2}$.
Now, looking at each edge of $H$ in turn allows us to deduce that the probability $\phi$ is a homomorphism is at least
\begin{equation}\label{eq:cycle_lb_start}
 \frac{1}{(n)_h} \sum_{\phi:[h] \hookrightarrow [n]} \prod_{i=1}^h \bbl\frac{d_{\phi(i)}^- d_{\phi(i+1)}^+}{m} - \frac{(d_{\phi(i)}^- d_{\phi(i+1)}^+)^2}{2m^2} \bbr,
\end{equation}
where we consider the argument of $\phi$ modulo $h$ in~\eqref{eq:cycle_lb_start}.
Also, note that one can factorise the linear term in~\eqref{eq:cycle_lb_start} and bound the second occurrence of $d_{\phi(i)}^-d_{\phi(i+1)}^+$ by $\Delta^2$ to get the lower bound
\begin{equation}\label{eq:cycle_lb_factored}
 \frac{1}{(n)_h} \bbl 1 - \frac{ \Delta^2}{2m} \bbr \sum_{\phi:[h] \hookrightarrow [n]} \prod_{i=1}^h \frac{d_{\phi(i)}^- d_{\phi(i)}^+}{m} 
\end{equation}
The idea is to argue that we can swap the order of the product and sum in~\eqref{eq:cycle_lb_factored} without changing the result very much.
To this end, let $\Phi_i$ be the set of functions $\phi:[h]\to [n]$ such that
\begin{enumerate}[i)]
 \item $|\phi([n])| = h-i$,
 \item $d_{\phi(j)}^+ \neq 0$ for each $j \in [h]$,
 \item $d_{\phi(j)}^- \neq 0$ for each $j \in [h]$.
\end{enumerate}
Note that for any function $\phi \not \in \bigcup_{i=0}^h \Phi_i$ then $\prod_{i=1}^h d_{\phi(i)}^- d_{\phi(i)}^+ =0$.
As a result of this we observe that
\begin{equation*}
 \sum_{\phi:[h] \hookrightarrow [n]} \prod_{i=1}^h d_{\phi(i)}^- d_{\phi(i)}^+ = \sum_{\phi \in \Phi_0} \prod_{i=1}^h d_{\phi(i)}^- d_{\phi(i)}^+.
\end{equation*}
As well as the fact that
\begin{equation}\label{eq:count_cyc_eq}
  \sum_{j=0}^h\sum_{\phi \in \Phi_j} \prod_{i=1}^h d_{\phi(i)}^- d_{\phi(i)}^+ = \sum_{\phi:[h] \rightarrow [n]} \prod_{i=1}^h d_{\phi(i)}^- d_{\phi(i)}^+ =\left( \sum_{i=1}^n  d_{i}^- d_{i}^+ \right)^h= m^h(1+Q)^h.
\end{equation}
For a given function $\phi:[h] \to [n]$ we define its weight as $w(\phi):= \prod_{i=1}^h d_{\phi(i)}^- d_{\phi(i)}^+$.
We also define the weight of a set $S$ of functions in the natural way as $w(S) = \sum_{\phi \in S} w(\phi)$.
Next, we shall apply switching arguments to bound the weights of the sets $\Phi_i$ relative to one another.

Consider the auxiliary bipartite graphs, $\cG_i$ with parts $\Phi_i$ and $\Phi_{i+1}$.
We connect $\phi_i \in \Phi_i$ to $\phi_{i+1} \in \Phi_{i+1}$ by an edge in $\cG_i$ if $\phi_i$ and $\phi_{i+1}$ differ in precisely one coordinate.
For each $\phi_i \in \Phi_i$ there are at most $h^2$ ways we can change one coordinate and decrease the size of the image.
In particular we may pick any of the $h$ coordinates of $\phi_i$ and change it to $\phi_i(j)$ for some $j \in [h]$.
So, $\Delta_{\cG_i}(\Phi_i) \leq h^2$.
For each $\phi_{i+1} \in \Phi_{i+1}$, we pick any of the at least $i$ coordinates at which $\phi_{i+1}$ is not injective and choose a new image for this coordinate.
By~\Cref{cond:conditions} there are at least $\eps n/2$ ways to choose the new image,
$\delta_{\cG_i}(\Phi_{i+1}) \geq i \eps n/2$.

Combining these two results allows us to deduce that $i\eps n/2 |\Phi_{i+1}| \leq e(\cG_i) \leq h^2 |\Phi_i|$.
Upon rearrangement we find $|\Phi_i| \geq \frac{\eps n}{2h^2} |\Phi_{i+1}|$.
Note that two functions $\phi$ and $\psi$ which differ in one coordinate must also satisfy $w(\phi) \leq \Delta^2 w(\psi)$ and vice versa.
Hence $w(\Phi_i) \geq \frac{\eps n}{2h^2 \Delta^2} w(\Phi_{i+1})$.
This allows us to apply induction to deduce that 
\begin{equation*}
  \sum_{j=0}^h \sum_{\phi \in \Phi_j} \prod_{i=1}^h d_{\phi(i)}^- d_{\phi(i)}^+ =
  \sum_{j=0}^h w(\Phi_j) \leq w(\Phi_0) \sum_{j=0}^h \left( \frac{2h^2\Delta^2}{\eps n} \right)^j \leq \frac{w(\Phi_0)}{1 - \frac{2h^2\Delta^2}{\eps n}}.
\end{equation*}
So by~\eqref{eq:count_cyc_eq}, $w(\Phi_0) \geq (1 - \frac{2h^2\Delta^2}{\eps n}) m^h(1+Q)^h$.
Combining this with~\eqref{eq:cycle_lb_factored} allows us to deduce the statement of the lemma, that the following is a lower bound on the probability of finding a cycle at a specified position in $G(\bd)$:
$$
 \frac{(1+Q)^h}{(n)_h} \left( 1 - \frac{2h^2\Delta^2}{\eps n} \right) \left( 1 - \frac{h \Delta^2}{2m}\right).
$$

\end{proof}
\section{Digraph Counting}\label{sec:counting}
In this section we will prove the following bound on the number of strongly connected multi-digraphs with maximum total degree $4$.
This is similar to~\cite[Lemma 2.3]{MC} although a weaker bound suffices here allowing us to simplify the proof somewhat.

\begin{lemma}\label{lem:count_scdegseq}
 Suppose that $n,m,a,b \in \bN$ such that $n+a+b=m$.
 Let $N(n,a,b)$ be the number of labelled strongly connected multi-digraphs with $n$ vertices and degree distribution given by
\begin{center}
\begin{tabular}{ c|c|c }
 in-degree & out-degree & quantity \\ \hline
  $1$ & $1$ & $n-2a-b$ \\ \hline
  $1$ & $2$ & a \\ \hline
  $2$ & $1$ & a \\ \hline
  $2$ & $2$ & b
\end{tabular}
\end{center}
Then, we have the following bound,
\begin{equation*}
 N(n,a,b) \leq (3a+2b) (m-1)! \binom{n}{a,a,b}
\end{equation*}
\end{lemma}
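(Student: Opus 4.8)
The plan is to bound $N(n,a,b)$ by building such a digraph from a structured backbone and then counting the number of ways to assemble it. Since the multi-digraph is strongly connected and has degree sequence dominated by $(1,1)$-vertices, the underlying structure is very rigid: the $(1,1)$-vertices lie on ``threads'' joining the vertices of higher degree, and the higher-degree vertices (there are $2a+b$ of them, namely $a$ of type $(1,2)$, $a$ of type $(2,1)$, and $b$ of type $(2,2)$) form the ``branch vertices''. First I would contract every maximal thread of $(1,1)$-vertices to a single arc, obtaining a strongly connected multi-digraph $H$ on the $2a+b$ branch vertices with $a+2b$ arcs (each $(1,2)$- or $(2,1)$-vertex contributing net one extra out- or in-stub, each $(2,2)$-vertex two), together with a labelling that records which of the original $n-2a-b$ thread vertices sit on each contracted arc and in what order.

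Second, I would count in two stages. The assignment of the $2a+b$ labels to the branch-vertex roles costs $\binom{n}{a,a,b}$ choices of \emph{which} vertices play which role — wait, more carefully: choose which $a$ of the $n$ vertices are $(1,2)$, which $a$ are $(2,1)$, which $b$ are $(2,2)$, the remaining $n-2a-b$ being $(1,1)$; this is exactly $\binom{n}{a,a,b}$ (up to the harmless factor for the $(1,1)$ class, which is determined). Then I must bound, for each such choice, the number of strongly connected multi-digraphs on the given vertex set with that degree sequence. The key reduction: form the multigraph $H$ on the $2a+b$ branch vertices; it has $m' \defeq a+2b + (\text{number of contracted threads})$ arcs, but the relevant bound is that a connected multi-digraph on $v$ vertices where every vertex has in- and out-degree at most $2$ and total excess $a+2b$ over the cycle... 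Actually the cleanest route is: the whole digraph has $m = n+a+b$ arcs, and I claim the number of strongly connected multi-digraphs on a \emph{fixed labelled} vertex set with this degree sequence is at most $(3a+2b)(m-1)!/\big(\text{symmetry}\big)$, which I would get by the standard ``build a closed walk'' argument — pick a root, traverse arcs, at each vertex the out-stubs are ordered, giving at most $\prod_v d_v^+! = 2^{a+b}\cdot\ldots$ orderings; but since we also divide by $\prod d_v^+!$ when counting configurations versus simple structures, the count telescopes. The factor $(3a+2b)$ should arise as a choice of a distinguished arc or starting branch-vertex needed to make the walk-encoding injective (e.g.\ $3a+2b = \sum_{\text{branch }v} d_v^+$ counts out-stubs at branch vertices).

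Third, I would assemble: number of structures $\le \binom{n}{a,a,b}\cdot(\text{number of ways to wire up the stubs into a strongly connected digraph})$, and the wiring bound is where the $(m-1)!$ and the $(3a+2b)$ enter. Concretely, order the $m$ arcs so that they form an Eulerian-type encoding of a spanning closed structure: there are at most $(m-1)!$ cyclic orders, and at most $3a+2b$ choices to anchor it, and every strongly connected multi-digraph with the given degrees is hit at least once. Summing/multiplying gives $N(n,a,b)\le (3a+2b)(m-1)!\binom{n}{a,a,b}$.

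The main obstacle I expect is making the encoding genuinely injective — i.e.\ showing that every strongly connected multi-digraph with the prescribed degree sequence is produced \emph{at least once} by the construction (surjectivity of the encoding onto the target set), and pinning down exactly which anchoring data costs the factor $3a+2b$ rather than something larger. Strong connectivity is used precisely here: it guarantees the existence of a closed walk traversing every arc (or at least a spanning closed substructure) rooted at a branch vertex, so that the ``list the arcs in traversal order'' encoding is well-defined and covers all of $N(n,a,b)$. Controlling the over-count (the division by automorphisms / stub-orderings that converts the naive $\prod d_v^+! \cdot m!$ into $(3a+2b)(m-1)!$) is the delicate bookkeeping step, but since the lemma only asks for an upper bound and explicitly notes a weaker bound than \cite[Lemma 2.3]{MC} suffices, I would be generous at every step rather than optimize constants.
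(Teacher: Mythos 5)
You have the right skeleton --- the factor $\binom{n}{a,a,b}$ for assigning degree roles, and the reduction to the $2a+b$ ``branch'' vertices by contracting the threads of degree-$(1,1)$ vertices, which is exactly the heart/preheart decomposition the paper uses --- but the step that actually produces the factor $(3a+2b)(m-1)!$ is not proved in your proposal, and the encoding you sketch for it does not work. Your plan is to list the $m$ arcs along a closed walk (``Eulerian-type encoding''), getting at most $(m-1)!$ cyclic orders times $3a+2b$ anchorings. When $a>0$ the digraph is not Eulerian: the $(1,2)$- and $(2,1)$-vertices have unequal in- and out-degree, so there is no cyclic ordering of the arcs in which consecutive arcs meet head-to-tail, i.e.\ no closed walk using every arc exactly once. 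Strong connectivity only gives you a closed walk using every arc \emph{at least} once, and such a walk has length strictly greater than $m$, so it does not yield an $(m-1)!$-type count; nor is it clear that the over-count of that longer walk can be absorbed into the anchor factor $3a+2b$. You flag this injectivity/anchoring issue yourself as the delicate step, but it is precisely the content of the lemma, so as it stands the proof has a genuine gap. (A smaller slip: the heart has $3a+2b$ arcs, not $a+2b$, since its excess is $a+b$ on $2a+b$ vertices.)

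The paper closes this gap with the preheart configuration model of P\'erez-Gim\'enez and Wormald: having fixed the degree roles, every strongly connected multi-digraph with these degrees arises from at least one preheart configuration, namely a stub-matching on the heart (there are $(3a+2b)!$ of these, since the heart has $3a+2b$ in-stubs and $3a+2b$ out-stubs) together with an assignment of the $n-2a-b$ degree-$(1,1)$ vertices to the $3a+2b$ heart arcs with a linear order on each arc (there are $(m-1)!/(3a+2b-1)!$ such assignments, as the total arc count is $m=n+a+b$). Multiplying gives exactly $(3a+2b)(m-1)!$ per degree assignment, hence the claimed bound. So your decomposition is the right one, but you need this two-stage configuration count (or an equivalent rigorous encoding) in place of the closed-walk argument.
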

To prove this bound we will use the preheart configuration model of P\'erez-Gim\'enez and Wormald~\cite{PGW} which we shall define as follows.

A \emph{preheart} is a multi-digraph with minimum semi-degree at least $1$ and no cycle components.
The $\emph{heart}$ of a preheart $D$ is the multidigraph $H(D)$ formed by suppressing all vertices of $D$ which have in and out degree precisely $1$.
For a degree sequence $\bd$ on vertex set $V$, define 
$$
T = T(\bd) = \{ v \in V : d^+(v) +d^-(v) \geq 3 \}.
$$
To form the \emph{preheart configuration model}, first we apply the configuration model to $T$ to produce a heart $H$.
Given a heart configuration $H$, we construct a preheart configuration $Q$ by assigning $V \sm T$ to $E(H)$ such that the vertices assigned to each arc of $H$ are given a linear order.
Denote this assignment including the orderings by $q$.
Then the preheart configuration model, $\cQ(\bd)$ is the probability space of random preheart configurations formed by choosing $H$ and $q$ uniformly at random.

It is easy to see that every strongly connected digraph is produced by the preheart configuration model.
Thus counting the number of possible outcomes from the preheart configuration model gives an upper bound for the number of strongly connected digraphs with the same degree sequence.
We now count the number of preheart configurations using~\cite[Lemma 2.4]{MC}.
\begin{lemma}
\label{lem:preheartconfigs}
In the preheart configuration model with $n$ vertices, $m$ edges and degree sequence $\bd$, let $n' = |T(\bd)|$ be the number of vertices of the heart.
 Then there are a total of
 $$
 \frac{n'+m-n}{m} m!
 $$
 preheart configurations.
\end{lemma}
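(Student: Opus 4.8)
The plan is to bound $N(n,a,b)$ by passing through the preheart configuration model, exactly as set up just before \Cref{lem:preheartconfigs}. First I would fix a labelling: put the degree sequence on the vertex set $[n]$ by declaring that $a$ specified vertices have degree $(1,2)$, another $a$ have $(2,1)$, $b$ have $(2,2)$, and the remaining $n-2a-b$ have $(1,1)$. The heart vertices $T$ are exactly those of total degree at least $3$, i.e.\ the $(1,2)$, $(2,1)$ and $(2,2)$ vertices, so $n' := |T| = 2a+b$; and counting out-stubs gives $m = (n-2a-b)+2a+a+2b = n+a+b$, matching the hypothesis. Every strongly connected multi-digraph on this degree-assigned vertex set with $a+b\ge 1$ has $m>n$ edges, hence is not a cycle, hence is a preheart, and so is produced by at least one preheart configuration; therefore the number of such digraphs is at most the number of preheart configurations on this degree sequence. (If $a=b=0$ the only such digraph is a cycle; the statement is meant for $a+b\ge 1$ and I assume this throughout.)

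Next I would apply \Cref{lem:preheartconfigs} with $n'=2a+b$ and $m=n+a+b$: the number of preheart configurations is
\[
\frac{n'+m-n}{m}\,m! \;=\; \frac{(2a+b)+(n+a+b)-n}{m}\,m! \;=\; \frac{3a+2b}{m}\,m! \;=\; (3a+2b)(m-1)!.
\]
This bounds the number of strongly connected multi-digraphs with one fixed assignment of the degree sequence to $[n]$.

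Finally I would sum over the possible assignments of the degree sequence to the labelled vertex set. The number of ways to choose which $a$ vertices of $[n]$ receive degree $(1,2)$, which $a$ receive $(2,1)$, and which $b$ receive $(2,2)$ — the remaining $n-2a-b$ then being forced to $(1,1)$ — is the multinomial coefficient $\binom{n}{a,a,b}=\frac{n!}{a!\,a!\,b!\,(n-2a-b)!}$. Multiplying the two bounds yields $N(n,a,b)\le (3a+2b)(m-1)!\binom{n}{a,a,b}$, as claimed.

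The argument is essentially immediate once \Cref{lem:preheartconfigs} is available, so I do not expect a serious obstacle; the only points needing care are the two bookkeeping checks above — that $n'=2a+b$ is genuinely the number of heart vertices, so that $\frac{n'+m-n}{m}m!$ collapses to $(3a+2b)(m-1)!$, and that passing from preheart configurations to the multi-digraphs they produce can only over-count, which holds because every preheart with the given degree sequence is produced by at least one preheart configuration.
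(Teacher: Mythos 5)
There is a fundamental problem: you have not proved the statement in question at all. The lemma you were asked to establish is \Cref{lem:preheartconfigs} itself, namely that the preheart configuration model on $n$ vertices, $m$ edges and heart size $n'=|T(\bd)|$ has exactly $\frac{n'+m-n}{m}m!$ configurations. Your argument instead proves \Cref{lem:count_scdegseq} (the bound $N(n,a,b)\le (3a+2b)(m-1)!\binom{n}{a,a,b}$) \emph{by invoking} \Cref{lem:preheartconfigs} as a black box. That reproduces, almost verbatim, the paper's own deduction of \Cref{lem:count_scdegseq} from \Cref{lem:preheartconfigs} (the choice of degree assignment via $\binom{n}{a,a,b}$, the identification $n'=2a+b$, $m-n=a+b$, and the simplification to $(3a+2b)(m-1)!$), but it leaves the target statement unproven; relative to the task it is circular.

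What is actually needed is the count of preheart configurations for a general degree sequence $\bd$. A direct argument runs as follows. A preheart configuration is a pair $(H,q)$: a heart configuration $H$, i.e.\ a uniformly chosen matching of the out-stubs to the in-stubs of the vertices in $T(\bd)$, together with an assignment $q$ of the $n-n'$ vertices of in- and out-degree $1$ to the arcs of $H$, with a linear order on the vertices placed on each arc. The heart has $m_H = m-(n-n')$ arcs, since the total out-degree of the heart vertices is $m$ minus the $n-n'$ out-stubs of the suppressed degree-$(1,1)$ vertices; hence there are $m_H!$ choices for $H$. Given $H$, the number of ways to distribute $n-n'$ labelled vertices into $m_H$ arcs with a linear order on each arc is $m_H(m_H+1)\cdots(m-1)=\frac{(m-1)!}{(m_H-1)!}$. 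Multiplying gives $m_H!\cdot\frac{(m-1)!}{(m_H-1)!}=m_H\,(m-1)!=\frac{n'+m-n}{m}\,m!$, as claimed. (The paper itself does not reprove this; it cites Lemma 2.4 of \cite{MC}, where essentially this two-step count is carried out.) Your bookkeeping checks ($n'=2a+b$, $m=n+a+b$, and that every strongly connected non-cycle digraph arises from at least one preheart configuration) are fine, but they belong to the proof of \Cref{lem:count_scdegseq}, not of the lemma you were asked to prove.
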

From this lemma, we may prove~\Cref{lem:count_scdegseq}.
\begin{proof}[Proof of~\Cref{lem:count_scdegseq}]
 First, we choose the degree sequence.
 So note that there are at most $\binom{n}{a,a,b}$ ways in which we can give $a$ vertices in-degree $1$ and out-degree $2$, $a$ vertices in-degree $2$ and out-degree $1$, $b$ vertices in-degree $2$ and out-degree $2$ and the remainder in-degree $1$ and out-degree $1$.
 Having fixed this degree sequence, by~\Cref{lem:preheartconfigs} as the heart contains $2a+b$ vertices and there are $a+b$ more edges than vertices, the number of strongly connected digraphs with this degree sequence is bounded above by $\frac{3a+2b}{m} m!$.
 Hence the number of strongly connected digraphs with degree distribution as in the statement of the lemma is at most
 \begin{equation*}
  (3a+2b) (m-1)! \binom{n}{a,a,b}
 \end{equation*}
as claimed.
\end{proof}

\section{Proof of~\Cref{thm:subcrit_main_thm}}\label{sec:proof}
In this section we prove~\Cref{thm:subcrit_main_thm} and show that every strongly connected component is a cycle and that these cycles are not particularly large.
In particular this provides a configuration model analogue of~\cite[Theorem 7]{LS}.
Working with the configuration model introduces several additional difficulties with the proof of this, foremost of these being that we do not have enough control of the subgraph counts to compute the factorial moments and show that they converge to those of a Poisson distribution.
We will instead use the Chen-Stein method for Poisson approximation which only requires good control of the first moment and an upper bound on the second.

The proof of this theorem splits naturally into four parts.
Choose functions $f(n) \gg g(n)$ which are defined such that $f(n) = \omega(\sqrt{m/|Q|})$, $f(n) = o(m|Q|/R^-)$ and $g(n) = \omega(1/|Q|)$.
Moreover for this section we shall assume that $R^- \geq R^+$ and if this is not the case, we swap the orientations of all edges to get an equivalent digraph with $R^- \geq R^+$ as desired.
We will say that a cycle $C$ is
\begin{itemize}
 \item \emph{Long} if $|C| \geq f(n)$,
 \item \emph{Medium} if $g(n) < |C| < f(n)$,
 \item \emph{Short} if $|C| \leq g(n)$.
\end{itemize}
First we will show that there are no long or medium cycles in the directed configuration model.
Next, we show that there are no complex components and finally we show the result on the distribution of the length of the $k$th longest cycle.
\subsection{Long Cycles}
\begin{lemma}\label{lem:long_cyc}
 The configuration model, $G(\bd)$ has no long cycles.
\end{lemma}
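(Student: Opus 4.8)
The plan is a first-moment argument: I will show that the expected number of long cycles in $G(\bd)$ tends to $0$, so that by Markov's inequality there are none with high probability. The natural estimate is
\[
\bE(\#\{\text{long cycles}\}) = \sum_{h \geq f(n)} \frac{h!}{\aut(C_h)} \binom{n}{h} \bP(\phi \text{ is a homomorphism for a fixed injective } \phi),
\]
where $C_h$ denotes the directed $h$-cycle. Since $\aut(C_h) = h$ and $\frac{h!}{h}\binom{n}{h} = \frac{(n)_h}{h}$, \Cref{lem:subgraphs_ub} applied to $H = C_h$ (every vertex has $h_{1,1}$-type degree, so $\prod \rho_{i,j}^{h_{i,j}} = \rho_{1,1}^h$) gives that each summand is at most
\[
\frac{(n)_h}{h} \cdot \frac{n^h}{(n)_h (m)_h} \rho_{1,1}^h = \frac{1}{h} \cdot \frac{n^h \rho_{1,1}^h}{(m)_h}.
\]

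The key point is then to control $n\rho_{1,1}/m$. One has $\rho_{1,1} = \bE(D_- D_+) = \mu_{1,1} = \frac{m}{n}(1+Q)$, which is recorded in the excerpt, so $n\rho_{1,1}/m = 1+Q$ exactly. Hence, using $(m)_h \geq (m-h)^h$ and $h = o(m)$ in the long range (indeed $f(n) = o(m|Q|/R^-) = o(m)$ since $|Q|, R^-$ are bounded), the per-term bound becomes roughly $\frac{1}{h}(1+Q)^h (1 + O(h/m))^h \leq \frac{1}{h}(1+Q)^h e^{O(h^2/m)}$. Because $Q < 0$ with $|Q| \to 0$ we have $(1+Q)^h \leq e^{Qh} = e^{-|Q|h}$, and the error exponent $h^2/m$ is negligible compared to $|Q| h$ precisely when $h = o(m|Q|)$ — which holds throughout the long range since $f(n) = o(m|Q|/R^-)$ and $R^- \geq \zeta$. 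Summing the geometric-type series $\sum_{h \geq f(n)} \frac{1}{h} e^{-|Q|h/2}$ gives a bound of order $e^{-\Omega(|Q| f(n))}$, which is $o(1)$ because $f(n) = \omega(\sqrt{m/|Q|}) = \omega(1/|Q|)$, so $|Q| f(n) \to \infty$.

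The main obstacle — really the only subtlety — is bookkeeping the two competing error terms: the $(m)_h$ versus $m^h$ discrepancy contributes $e^{O(h^2/m)}$, and I must check this is dominated by the main decay $e^{-|Q| h}$ uniformly over the whole long range $h \in [f(n), n]$. The worst case is the upper end $h$ close to $n$; there I should not use the crude $(m-h)^h$ bound but instead argue directly that for $h$ a constant fraction of $n$ the quantity $(1+Q)^h \binom{n}{h}^{-1}$-type factors already force the sum to be exponentially small, or simply note $(n)_h/(m)_h \leq 1$ and $(1+Q)^h \to 0$ geometrically. I would split the sum at, say, $h = m/2$: for $f(n) \leq h \leq m/2$ use the expansion above, and for $h > m/2$ use the trivial bound that each term is at most $(1+Q)^h \leq e^{-|Q| m/2}$ times a polynomial factor, which is super-exponentially small. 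Combining the two ranges shows $\bE(\#\{\text{long cycles}\}) = o(1)$, and the lemma follows.
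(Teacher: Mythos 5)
Your first-moment calculation is, in the range where it works, exactly the paper's proof that there are no \emph{medium} cycles (lengths between $g(n)$ and $f(n)$): there one gets the per-length bound $\frac{m^h(1+Q)^h}{h\,(m)_h}\le \frac{1}{h}e^{hQ+2h^2/m}$ and the error exponent $2h^2/m$ is dominated by $h|Q|/2$ precisely because $h\le f(n)=o(m|Q|)$. But the lemma you are proving concerns $h\ge f(n)$, i.e.\ all lengths up to $n$, and here your argument has a genuine gap. The constraint $h=o(m|Q|)$ is an \emph{upper} restriction on $h$, while $f(n)=o(m|Q|/R^-)$ only controls the lower endpoint of the long range; for $h$ between roughly $m|Q|$ and $n$ (a nonempty range, since $|Q|\to 0$) the factor $e^{2h^2/m}$ is no longer beaten by $e^{-|Q|h}$. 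Your proposed patch for $h>m/2$ is where this becomes fatal: the factor $m^h/(m)_h$ is not polynomial --- for $h=\Theta(m)$ it is $e^{\Theta(m)}$ (e.g.\ $m^m/m!\sim e^m$) --- whereas $(1+Q)^h\ge e^{-2|Q|h}=e^{-o(m)}$ because $|Q|\to 0$, so it does not decay ``geometrically'' at any fixed rate. Hence the bound supplied by \Cref{lem:subgraphs_ub} is exponentially \emph{large} for $h=\Theta(n)$, and summing it cannot give $o(1)$. (This is a deficiency of that upper bound rather than of the truth, but it means the lemma you invoke cannot close the argument.)

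The paper avoids this entirely and proves the absence of long cycles by a different mechanism: it explores the out-component of each vertex stub by stub, shows via the Azuma--Hoeffding inequality that the drift $Q_t$ of the exploration walk $Y_t$ stays below $-|Q|/2$ for all $t\le f(n)$, and then applies the optional stopping theorem to the supermartingale $Y_{\min(t,\tau)}+|Q|\min(t,\tau)/2$ to conclude $\bP(|C^+(v)|\ge f(n))\le 2d^+(v)/(|Q|f(n))+1/n$. Summing over $v$ and using $f(n)=\omega(\sqrt{m/|Q|})$ shows no vertex has a large out-component, and any cycle through $v$ lies inside $v$'s out-component. To repair your proof you would need either an argument of this type for $h\gg m|Q|$, or a substantially sharper count of cycles of length $\Theta(n)$ than \Cref{lem:subgraphs_ub} provides.
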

To show that there are no long cycles, it suffices to show that the out-component of an arbitrary vertex is bounded above by $f(n)$.
Certainly, the longest cycle in a directed graph is at most the size of the largest out-component and so the lemma follows.

Thus, consider the following version of the branching process of Hatami and Molloy~\cite{HM} for the out-component in a digraph.
For a vertex $v$ we explore its out-component in the random digraph $G(\bd)$ as follows.
We will have a partial subdigraph $C_t$ at time $t$ consisting of the vertices explored thus far.
$C_t$ will consist of all in- and out-stubs of some vertices of $G(\bd)$ together with a matching of some of the stubs.
If there are unmatched out-stubs in $C_t$ we will pick one at random and match it to some in-stub which yields an edge of $C_t$.
We define $Y_t$ as the number of unmatched out-stubs in $C_t$.
Thus, $Y_t = 0$ indicates that we have explored  an out-component in its entirety.
Formally we define the exploration process as follows.
\begin{itemize}
 \item Choose a vertex $v$ and initialise $C_0 = \{v\}$ and $Y_0 = d^+(v)$.
 \item While $Y_t>0$, choose an arbitrary unmatched out-stub of any vertex $v \in C_t$. Pick a uniformly random unmatched in-stub and let $u$ be the vertex to which this in-stub belongs. Match these two stubs forming an edge of $G$.
 \begin{itemize}
 \item If $u \not\in C_t$ we add it so $C_{t+1} = C_t \cup \{u\}$ and $Y_{t+1} = Y_t + d^+(u)-1$.
 \item Otherwise, $C_{t+1} = C_t$, $Y_{t+1} = Y_t -1$.
 \end{itemize}
\end{itemize}
Note that $Y_t$ does not depend on how we have exposed $C_t$ so $C_t$ and $Y_t$ are Markov processes.
We define the following quantities.
\begin{itemize}
 \item $D_t := Y_t + \sum_{u \not\in C_t} d^+(u)$, the number of unmatched out-stubs at time $t$. Note this is also the number of unmatched in-stubs at time $t$.
 \item $v_t := \emptyset$ if $C_{t-1}$ and $C_t$ have the same vertex set. Otherwise it is the unique vertex in $C_t\sm C_{t-1}$.
 \item $Q_t := \frac{\sum_{u \not\in C_t} d^-(u) d^+(u)}{D_t} - 1$.
\end{itemize}
Note that initially $Q_t = Q$.
Also, for unvisited vertices $u \not\in C_t$, the probability that we explore $u$ next is $\bP(v_{t+1} = u) = \frac{d^-(u)}{D_t}$.
Hence, provided that $Y_t>0$, the expected change in $Y_t$ is
\begin{equation}\label{eq:expected_change_EP}
 \bE(Y_{t+1}-Y_t | C_t) = \sum_{u \not\in C_t} \bP(v_{t+1} = u) (d^+(u) - 1) = \frac{\sum_{u\not\in C_t} d^-(u)d^+(u)}{D_t} - 1 = Q_t.
\end{equation}
As long as $Q_t$ remains close to $Q$ we expect that $Y_t$ is a random walk with drift approximately $Q$.
So in particular, for our setting of $Q<0$, we expect that the random walk will quickly return to 0.
Thus, we shall start by showing that the drift parameter $Q_t$ is indeed close to $Q$ with high probability.
For this we shall use the following formulation of the Azuma-Hoeffding inequality (see~\cite[Theorem 2.25]{JLR}).
\begin{theorem}[Azuma-Hoeffding Inequality]\label{thm:AH_ineq}
Let $(X_k)_{k=0}^n$ be a martingale with $X = X_n$, $X_0 = \bE(X)$.
Suppose there exist constants, $c_k >0$ such that
$$
|X_k - X_{k-1}| \leq c_k\; \text{ for all }\; k \leq n.
$$
Then for any $\lambda \geq 0$,
\begin{equation*}
 \bP(|X_n - X_0| \geq \lambda) \leq 2 \exp\left(-\frac{\lambda^2}{2\sum_{k=1}^n c_k^2}\right).
\end{equation*}
\end{theorem}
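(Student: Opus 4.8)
The plan is to prove the one-sided estimate $\bP(X_n - X_0 \geq \lambda) \leq \exp\big(-\lambda^2/(2\sum_{k=1}^n c_k^2)\big)$ by the standard exponential-moment (Chernoff) method, and then obtain the two-sided statement by applying the one-sided bound to the martingale $(-X_k)_{k=0}^n$ and taking a union bound. Throughout, let $\cF_k$ denote the natural filtration of the martingale and write $D_k := X_k - X_{k-1}$ for the martingale differences, so that $X_n - X_0 = \sum_{k=1}^n D_k$, with $\bE(D_k \mid \cF_{k-1}) = 0$ and $|D_k| \leq c_k$ almost surely by hypothesis.

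First I would fix a parameter $t > 0$ (to be optimized later) and apply Markov's inequality to the nonnegative random variable $e^{t(X_n - X_0)}$, obtaining
$$
\bP(X_n - X_0 \geq \lambda) \leq e^{-t\lambda}\,\bE\!\left(e^{t(X_n - X_0)}\right) = e^{-t\lambda}\,\bE\!\left(\prod_{k=1}^n e^{t D_k}\right).
$$
The crucial input is Hoeffding's lemma: if $Z$ satisfies $\bE(Z)=0$ and $|Z|\leq c$ almost surely, then $\bE(e^{tZ}) \leq e^{t^2 c^2/2}$. I would prove this from convexity of $z\mapsto e^{tz}$: for $z\in[-c,c]$ one has $e^{tz} \leq \frac{c-z}{2c}e^{-tc} + \frac{c+z}{2c}e^{tc}$; taking expectations and using $\bE(Z)=0$ gives $\bE(e^{tZ}) \leq \tfrac12(e^{-tc}+e^{tc}) = \cosh(tc)$, and the elementary series comparison $\cosh(x)\leq e^{x^2/2}$ finishes it.

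Next I would peel off the product one factor at a time. Since $\prod_{k=1}^{n-1} e^{t D_k}$ is $\cF_{n-1}$-measurable and $D_n$ has conditional mean $0$ and $|D_n|\leq c_n$ given $\cF_{n-1}$, Hoeffding's lemma applied conditionally yields
$$
\bE\!\left(\prod_{k=1}^n e^{t D_k}\right) = \bE\!\left(\prod_{k=1}^{n-1} e^{t D_k}\,\bE\!\left(e^{t D_n}\mid \cF_{n-1}\right)\right) \leq e^{t^2 c_n^2/2}\,\bE\!\left(\prod_{k=1}^{n-1} e^{t D_k}\right),
$$
and iterating this bound down to $k=1$ gives $\bE(e^{t(X_n-X_0)}) \leq \exp\big(\tfrac{t^2}{2}\sum_{k=1}^n c_k^2\big)$. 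Combining with the Markov step, $\bP(X_n - X_0 \geq \lambda) \leq \exp\big(-t\lambda + \tfrac{t^2}{2}\sum_{k=1}^n c_k^2\big)$ for every $t>0$; the choice $t = \lambda/\sum_{k=1}^n c_k^2$ minimizes the exponent and produces $\bP(X_n - X_0 \geq \lambda) \leq \exp\big(-\lambda^2/(2\sum_{k=1}^n c_k^2)\big)$.

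Finally, $(-X_k)_{k=0}^n$ is itself a martingale with $(-X_0) = \bE(-X_n)$ and the same difference bounds $|(-X_k)-(-X_{k-1})| \leq c_k$, so the argument above applied verbatim gives $\bP(X_n - X_0 \leq -\lambda) \leq \exp\big(-\lambda^2/(2\sum_{k=1}^n c_k^2)\big)$; a union bound of the two tail events yields the factor $2$ in the statement. The only genuinely delicate point is Hoeffding's lemma together with its correct use \emph{conditionally} in the peeling step; the Markov/optimization bookkeeping is routine.
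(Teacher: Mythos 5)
Your proof is correct: it is the standard Chernoff/Hoeffding-lemma argument (Markov's inequality on $e^{t(X_n-X_0)}$, the conditional bound $\bE(e^{tD_k}\mid\cF_{k-1})\leq e^{t^2c_k^2/2}$, optimization in $t$, and the reflection plus union bound for the two-sided statement), and all steps, including the conditional use of Hoeffding's lemma, are sound. The paper does not prove this theorem at all --- it quotes it from Janson, \L{}uczak and Ruci\'nski --- and your argument is precisely the canonical proof behind that citation, so nothing further is needed.
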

For $t \geq 1$ define $W_t:=Q_t - Q_{t-1} -\bE(Q_t - Q_{t-1}|C_{t-1})$.
Also, we define $X_0 = Q$ and for $t\geq 1$ let
\begin{equation}\label{eq:Xt_defn}
 X_t := X_0 + \sum_{i=1}^t W_i = Q_t - \sum_{i=1}^t \bE(Q_i - Q_{i-1}|C_{i-1}).
\end{equation}
It is a simple check that the $X_t$ forms a martingale.
Furthermore, $|Q_t-Q| \leq |Q_t-X_t| + |X_t-Q|$ so to bound the probability that $|Q_t-Q|$ is large, we show that $|Q_t - X_t|$ is small and bound the probability that $|X_t - Q|$ is large.

For the second of these, consider the auxiliary random variables
$$
\widetilde{Q}_t := \frac{\sum_{u \not\in C_t} d^-(u) d^+(u)}{D_{t-1}} - 1.
$$
That is we change $Q_t$ to have the same denominator as $Q_{t-1}$.
As we assume that $t \leq m/2$, $D_{t} \geq m/2$ so combining this with the fact that $|D_t - D_{t-1}| \leq 1$ we deduce that
\begin{equation}\label{eq:QvsQtwiddle}
 |Q_t - \widetilde{Q}_t| = \frac{|D_t - D_{t-1}| \sum_{u \not\in C_t} d^-(u) d^+(u)}{D_t D_{t-1}} \leq \frac{4}{m}.
\end{equation}
Also, as there is at most one vertex whose contributions are removed in moving from $Q_{t-1}$ to $Q_t$, then
\begin{equation}\label{eq:Qtminus1vst}
|Q_{t-1} - \widetilde{Q}_t| \leq \frac{4\Delta^2}{m}.
\end{equation}
Combining equations~\eqref{eq:QvsQtwiddle} and~\eqref{eq:Qtminus1vst} gives an upper bound on $|Q_t - Q_{t-1}|$ which we may then use to bound the martingale differences almost surely,
\begin{equation}\label{eq:X_diff_bound}
 |X_t - X_{t-1}| \leq \frac{8\Delta^2 + 8}{m} \leq \frac{16 \Delta^2}{m}.
\end{equation}
Next, we will bound the terms $\bE(Q_t - Q_{t-1}|C_{t-1})$.
It will be convenient to do this in two stages utilising the auxiliary random variables $\widetilde{Q}_t$.
So, first note that 
\begin{align}
0\leq \bE(Q_{t-1} - \widetilde{Q}_t|C_{t-1}) & = \sum_{u \not\in C_{t-1}} \bP(v_t = u) \frac{d^-(u)d^+(u)}{D_{t-1}} \leq \sum_{u \in V(G)} \frac{(d^-(u))^2d^+(u)}{D_{t-1}^2} \nonumber \\
& \leq \sum_{u \in V(G)} \frac{4(d^-(u))^2d^+(u)}{m^2} \leq \frac{4R^- + 4}{m}. \label{eq:EQtwidQ}
\end{align}
We can combine~\eqref{eq:EQtwidQ} with~\eqref{eq:QvsQtwiddle} and apply~\Cref{cond:conditions} to deduce that
\begin{equation}
 |\bE(Q_{t} - Q_{t-1}|C_{t-1})| \leq \frac{4R^- + 8}{m} \leq \frac{12R^-}{\zeta m}. \label{eq:QvsQ_prev}
\end{equation}
This leaves us in a situation in which we can compare $X_t$ and $Q_t$,
\begin{equation}
 |X_t - Q_t| \leq \sum_{i=1}^t |\bE(Q_{i} - Q_{i-1}|C_{i-1})| \leq \frac{12R^- t}{\zeta m}. \label{eq:XtQt_bound}
\end{equation}
So provided that $t\leq \frac{m\zeta|Q|}{48 R^-}$ we have $|X_t - Q_t| \leq |Q|/4$ and in particular, for any such $t$,
\begin{align*}
 &\bP\left(Q_t - Q \geq \frac{|Q|}{2}\right) = \bP\left(Q_t-X_t+X_t-Q \geq \frac{|Q|}{2}\right) \\
 \leq\;& \bP\left(X_t - Q \geq \frac{|Q|}{4} \right) \leq \bP\left(|X_t - Q| \geq \frac{|Q|}{4} \right)
\end{align*}
Whereupon we can apply the Azuma-Hoeffding inequality as $X_0 = Q$.
We can use the bound from~\eqref{eq:X_diff_bound} for the $c_k$.
Substituting into~\Cref{thm:AH_ineq} yields,
\begin{equation}
 \bP\left(|X_t - Q| \geq \frac{|Q|}{4} \right) \leq \exp \left( -\frac{|Q|^2 m^2}{8192 t \Delta^4}\right) \leq \exp \left( -C |Q|m n^{-2/3} \log(n) \right), \label{eq:apply_AH}
\end{equation}
where the second inequality in~\eqref{eq:apply_AH} comes from $t \leq \frac{m\zeta|Q|}{48 R^-}$, $\Delta \leq n^{1/6} \log^{-1/4}(n)$ and $R^- \geq \zeta$.
We could improve the dependence on $\Delta$ by using Freedman's inequality~\cite{Freedman} in place of the Azuma-Hoeffding inequality here however there are other points where we require $\Delta \leq n^{1/6}$ and so this would only remove the $\log^{-1/4}(n)$ term in~\Cref{cond:conditions}.
Note $mn^{-2/3} \geq m^{1/3}/2$ hence $|Q|mn^{-2/3} \to \infty$ and so for any large enough $n$,
\begin{equation}
 \bP\left(Q_t - Q \geq \frac{|Q|}{2}\right) = \bP\left( Q_t \geq -\frac{|Q|}{2}\right) \leq n^{-2}. \label{eq:final_EP_bound}
\end{equation}
Now that we have shown that $Q_t$ is concentrated around $Q$, we can proceed to show that $G(\bd)$ has no large components with high probability via a stopping time argument.
We will use the following version of Doob's optional stopping theorem~\cite[Theorem 10.10]{W}
\begin{theorem}[Optional Stopping Theorem]
\label{thm:DOST}
Let $X$ be a supermartingale and let $\tau$ be a stopping time. Then $X_\tau$ is integrable and furthermore,
\begin{equation*}
 \bE(X_\tau) \leq \bE(X_0)
\end{equation*}
whenever $\tau$ is bounded.
\end{theorem}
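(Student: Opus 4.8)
The plan is to reduce the statement to the elementary fact that, under the boundedness hypothesis, $X_\tau$ can be written as a finite telescoping sum whose increments have non-positive conditional expectation. Fix a constant $N$ with $\tau \leq N$ almost surely, and let $(\cF_k)_{k \geq 0}$ be the filtration with respect to which $X$ is a supermartingale.

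First I would dispatch integrability. Since $\tau$ takes values in $\{0,1,\dots,N\}$, we have the crude pointwise bound $|X_\tau| \leq \sum_{k=0}^N |X_k|$, and each $X_k$ is integrable by the definition of a supermartingale; hence $X_\tau \in L^1$ and $\bE(X_\tau)$ is well-defined.

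For the inequality I would use the decomposition
$$
X_\tau = X_0 + \sum_{k=1}^N (X_k - X_{k-1})\,\ind[\tau \geq k],
$$
which holds pointwise because $\tau \leq N$. The key observation is that $\{\tau \geq k\} = \{\tau \leq k-1\}^c \in \cF_{k-1}$, as $\tau$ is a stopping time, so $\ind[\tau \geq k]$ is $\cF_{k-1}$-measurable. Taking expectations and inserting a conditioning on $\cF_{k-1}$ in each term gives
$$
\bE(X_\tau) = \bE(X_0) + \sum_{k=1}^N \bE\Big(\ind[\tau \geq k]\,\bE\big(X_k - X_{k-1} \mid \cF_{k-1}\big)\Big).
$$
Because $X$ is a supermartingale, $\bE(X_k - X_{k-1}\mid \cF_{k-1}) \leq 0$ almost surely, and $\ind[\tau \geq k] \geq 0$, so every summand is non-positive. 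Therefore $\bE(X_\tau) \leq \bE(X_0)$.

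There is essentially no serious obstacle: the two points needing attention are the $\cF_{k-1}$-measurability of $\ind[\tau \geq k]$ (exactly the stopping-time property) and the finiteness of the sum (exactly the boundedness of $\tau$), both of which are immediate. As an alternative presentation one could instead invoke that the stopped process $(X_{\tau \wedge n})_{n \geq 0}$ is itself a supermartingale and evaluate at $n = N$, obtaining $\bE(X_\tau) = \bE(X_{\tau \wedge N}) \leq \bE(X_{\tau \wedge 0}) = \bE(X_0)$; this is the route I would take if a shorter write-up is preferred.
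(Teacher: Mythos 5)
Your proof is correct: the telescoping decomposition $X_\tau = X_0 + \sum_{k=1}^N (X_k - X_{k-1})\ind[\tau \geq k]$, the $\cF_{k-1}$-measurability of $\ind[\tau\geq k]$, and the supermartingale increment bound are exactly the standard argument (equivalently, your stopped-process alternative), and the integrability bound $|X_\tau|\leq\sum_{k=0}^N |X_k|$ is fine. The paper does not prove this statement at all but simply cites it (Williams, Theorem 10.10), and your write-up is precisely the textbook proof being invoked there, so there is nothing to correct.
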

So, now let us show that there is no component of size larger than $f(n)$.
For each $v \in V(G)$ we shall consider the exploration process started at $v$.
Recall that $f(n) = o(n|Q|/R^-)$ and so for sufficiently large $n$, $f(n) \leq \frac{m\zeta|Q|}{48 R^-}$.
Hence we have $Q_t \leq -|Q|/2$ with high probability for each $t \leq f(n)$.
Define the stopping time
$$
\tau := \min\{t \geq 0 | Y_t = 0 \text{ or } Q_t \geq -|Q|/2 \text{ or } t=f(n) \}
$$
Recall that for $t \leq \frac{m\zeta|Q|}{48 R^-}$ we have $\bE(Y_t - Y_{t-1}) = Q_{t-1} \leq -|Q|/2$. Thus, $Y_{\min(t,\tau)} + |Q| \min(t,\tau)/2$ is a supermartingale.
Clearly, $\tau$ is bounded by $f(n)$ so we may apply~\Cref{thm:DOST} to $Y_{\min(t,\tau)} + |Q| \min(t,\tau)/2$ from which we deduce that
\begin{equation*}
 \bE\left(Y_\tau + \frac{|Q|\tau}{2}\right) \leq Y_0 = d^+(v).
\end{equation*}
Upon rearrangement this yields,
\begin{equation*}
\bE(\tau) \leq 2\frac{d^+(v)-\bE(Y_\tau)}{|Q|} \leq \frac{2 d^+(v)}{|Q|}.
\end{equation*}
By Markov's inequality we can deduce
$$
\bP(\tau = f(n)) \leq \frac{2 d^+(v)}{|Q|f(n)}
$$
The only other way in which we could have $Y_\tau \neq 0$ is if for some $i$ we have $Q_i \geq -|Q|/2$.
A union bound allows us to deduce that this occurs with probability at most $f(n) n^{-2} \leq n^{-1}$.
So for any large enough $n$,
$$
\bP(Y_\tau \neq 0) \leq \frac{2 d^+(v)}{|Q|f(n)} + \frac{1}{n}.
$$
Define $Z$ as the number of vertices of $G$ which lie in cycles of size at least $f(n)$.
Note that any such vertex must have out component of size at least $f(n)$.
Thus,
\begin{align*}
 \bP(|\cC_1|\geq f(n)) & 
 = \bP(Z \geq f(n))
 \leq \frac{\bE(Z)}{f(n)} 
 \leq \frac{1}{f(n)} \sum_{v \in V(G)} \bP(|C^+(v)| \geq f(n)) \\
 & \leq \frac{1}{f(n)} \sum_{v \in V(G)} \left( \frac{2 d^+(v)}{|Q|f(n)} + \frac{1}{n} \right)
 = \frac{m}{|Q|f(n)^2} + \frac{1}{f(n)} = o(1)
\end{align*}
as $f(n) \gg \sqrt{m/|Q|}$. 
Thus there are no long cycles in $G(\bd)$.
\subsection{Medium Cycles}
\begin{lemma}\label{lem:med_cyc}
The configuration model, $G(\bd)$ has no medium cycles.
\end{lemma}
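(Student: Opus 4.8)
The plan is a first moment argument. I would bound the expected number of cycles whose length lies in the medium range $(g(n),f(n))$ and show this expectation is $o(1)$; Markov's inequality then gives the lemma. Explicitly, it suffices to prove
\[
 \sum_{g(n) < h < f(n)} \bE\big(\#\{\text{cycles of length }h\text{ in }G(\bd)\}\big) = o(1).
\]

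To do this I would apply \Cref{lem:subgraphs_ub} with $H = C_h$, the directed cycle on $h$ vertices. Every vertex of $C_h$ has in- and out-degree $1$, so the only factorial moment that appears is $\rho_{1,1} = \bE(D_-D_+) = \mu_{1,1} = \frac{m}{n}(1+Q)$, and $\aut(C_h)=h$, since the only orientation-preserving automorphisms of a directed cycle are its $h$ rotations. As each directed cycle of length $h$ in $G(\bd)$ is exactly one copy of $C_h$, \eqref{eq:ehg+} gives
\[
 \bE\big(\#\{\text{cycles of length }h\}\big) \le n^+(C_h,G(\bd)) = \frac{(1+Q)^h}{h}\cdot\frac{m^h}{(m)_h}.
\]
The choice of $f$ together with $R^-\ge\zeta$ from \Cref{cond:conditions} gives $h < f(n) = o(m|Q|/R^-) = o(m|Q|)$, hence $h = o(m)$ and $\frac{h}{m|Q|}\le\frac{f(n)}{m|Q|}\to 0$. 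Thus $\frac{m^h}{(m)_h} = \prod_{i=0}^{h-1}(1-i/m)^{-1} \le e^{h^2/m}$, and combined with $\log(1+Q)\le Q = -|Q|$ this yields, for all sufficiently large $n$,
\[
 \log\!\big(h\, n^+(C_h,G(\bd))\big) \le \frac{h^2}{m} - h|Q| = -h|Q|\bbl 1 - \frac{h}{m|Q|}\bbr \le -\frac{h|Q|}{2},
\]
so that $n^+(C_h,G(\bd)) \le \frac1h e^{-h|Q|/2}$ throughout the medium range.

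Finally I would sum this over the range. Since the summand is decreasing in $h$, comparing with an integral and substituting $u = x|Q|/2$ gives
\[
 \sum_{g(n) < h < f(n)} \frac1h e^{-h|Q|/2} \le \int_{g(n)-1}^{\infty}\frac{e^{-x|Q|/2}}{x}\,dx = \int_{(g(n)-1)|Q|/2}^{\infty}\frac{e^{-u}}{u}\,du = \xi_{(g(n)-1)|Q|/2}.
\]
Because $g(n) = \omega(1/|Q|)$, the lower endpoint tends to infinity, and $\xi_\alpha \le e^{-\alpha}/\alpha\to 0$ as $\alpha\to\infty$; hence the expected number of medium cycles is $o(1)$, and Markov's inequality shows that with high probability $G(\bd)$ has none.

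I do not expect a genuine obstacle here: the argument is essentially a verification that the cut-offs are chosen correctly. The point is that $f(n) = o(m|Q|/R^-)$ is exactly what absorbs the factor $m^h/(m)_h = e^{O(h^2/m)}$ into the exponential decay $(1+Q)^h$, while $g(n) = \omega(1/|Q|)$ is exactly what makes the tail $\sum_{h>g(n)}h^{-1}e^{-h|Q|/2}$ vanish. The only care needed is to check that the estimates are uniform over the whole medium range, and to correctly identify $\aut(C_h)=h$ and $\rho_{1,1} = \frac{m}{n}(1+Q)$.
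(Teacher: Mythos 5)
Your proposal is correct and follows essentially the same route as the paper: a first moment bound via \Cref{lem:subgraphs_ub} giving $\frac{(1+Q)^h}{h}\cdot\frac{m^h}{(m)_h}$ per length $h$, absorption of $m^h/(m)_h$ into the exponential decay using the upper cutoff $f(n)=o(m|Q|)$, and comparison of the resulting sum with the exponential integral, which vanishes because $g(n)|Q|\to\infty$. The only differences are cosmetic (slightly different constants in the bound on $m^h/(m)_h$ and in the integral's lower endpoint).
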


Our next step is to apply~\Cref{lem:subgraphs_ub} to show there are no medium cycles.
By~\Cref{lem:subgraphs_ub}, the probability that $G(\bd)$ has a cycle of length $h$ in any particular location is at most $\frac{n^h \mu_{1,1}^h}{(n)_h(m)_h} $.
Thus the expected number of cycles of length $h$ in $G(\bd)$ is at most
\begin{align}
\frac{h!}{|\aut(\overrightarrow{C_h})|} \binom{n}{h} \frac{n^h \mu_{1,1}^h}{(n)_h(m)_h} & = \frac{m^h}{(m)_h}\frac{(1+Q)^h}{h}
\leq \bbl 1+\frac{h}{m-h} \bbr^h  \frac{(1+Q)^h}{h} \nonumber \\
& \leq \bbl 1+\frac{2h}{m} \bbr^h  \frac{(1+Q)^h}{h} 
\leq \frac{e^{hQ + \frac{2h^2}{m}}}{h}. \label{eq:med_cyc_ub}
\end{align}
For any $g(n) \leq h \leq f(n)$, we have $2h^2/m \leq 4h^2/n \leq h|Q|/2$ as $f(n) = o(n|Q|)$.
Then, the expected number of cycles of length between $g(n)$ and $f(n)$ is at most
\begin{equation}\label{eq:cycle_expectation}
 \sum_{h=g(n)}^{f(n)}  \frac{e^{hQ+ \frac{2h^2}{m}}}{h}
 \leq \sum_{h=g(n)}^{f(n)}  \frac{e^{\frac{hQ}{2}}}{h}
 \leq \int_{g(n)}^\infty \frac{e^{\frac{hQ}{2}}}{h} dh
 = \int_{-\frac{Qg(n)}{2}}^\infty \frac{e^{-\lambda}}{\lambda} d\lambda
 = E_1\bbl -\frac{Qg(n)}{2} \bbr,
\end{equation}
where $E_1(x)$ is the exponential integral function and the first equality follows by making the substitution $\lambda = -Qh/2$ (recall that $Q<0$ and so this substitution preserves positivity).
It is straightforward to bound $0 \leq E_1(x) \leq e^{-x}/x$ which allows us to conclude that $E_1(x) \to 0$ as $x \to \infty$.
Note that $g(n) = \omega(1/|Q|)$ and so $-Qg(n) \to \infty$.
Thus the expected number of cycles in $G(\bd)$ of length at least $g(n)$ is $o(1)$.
So by Markov's inequality, there are no such cycles with high probability.

\subsection{Complex Components}
\begin{lemma}\label{lem:com_com}
The configuration model, $G(\bd)$ has no complex components.
\end{lemma}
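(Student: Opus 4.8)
The plan is to show that a complex strongly connected component must contain one of two very small ``excess-one'' structures, and then to bound the expected number of copies of each using~\Cref{lem:subgraphs_ub} together with the already-established absence of long and medium cycles.

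First I would set up the reduction. Call a \emph{theta} a digraph consisting of two vertices joined by three internally disjoint directed paths, two directed one way and the third the other, and a \emph{figure-eight} a digraph consisting of two directed cycles meeting in exactly one vertex; each is strongly connected with exactly one more edge than vertex. I claim every complex strongly connected component $D$ contains a copy of a theta or of a figure-eight: this follows from an ear decomposition $D = C_0 \cup P_1 \cup \dots \cup P_r$, where $C_0$ is a directed cycle, each $P_i$ is a directed path or cycle whose endpoints lie in $C_0 \cup \dots \cup P_{i-1}$ and whose internal vertices do not, and $r = e(D) - v(D) \geq 1$ since $D$ is complex; then $C_0 \cup P_1$ is strongly connected with exactly one excess edge, and it is a theta if $P_1$ joins two distinct vertices of $C_0$ and a figure-eight otherwise. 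I would also record that a theta on $h$ vertices has $h+1$ edges, and that the longer of its two equidirected paths followed by its third path is a directed cycle of length at least $(h+1)/2$; likewise the longer cycle of a figure-eight on $h$ vertices has length at least $(h+1)/2$. By~\Cref{lem:long_cyc} and~\Cref{lem:med_cyc}, with high probability $G(\bd)$ has no cycle of length exceeding $g(n)$; and since $n|Q|^3 \geq \zeta^2 \cdot n|Q|^3(R^-R^+)^{-1} \to \infty$ (using $R^-,R^+ \geq \zeta$) we have $|Q| = \omega(n^{-1/3})$, so with $m = \Theta(n)$ we may assume $g(n) = o(\sqrt m)$ without violating $g(n) = \omega(1/|Q|)$. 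Hence it suffices to bound the expected number of copies of thetas and of figure-eights on at most $2g(n)$ vertices.

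Next I would apply~\Cref{lem:subgraphs_ub}. A theta $H$ on $h$ vertices has one vertex of in/out-degree $(1,2)$, one of in/out-degree $(2,1)$, and $h-2$ of in/out-degree $(1,1)$, and using $\rho_{1,2} = \frac{m}{n}R^+$, $\rho_{2,1} = \frac{m}{n}R^-$, $\rho_{1,1} = \mu_{1,1} = \frac{m}{n}(1+Q)$ and $(m)_{h+1} = \Theta(m^{h+1})$ (uniformly for $h = o(\sqrt m)$), the bound~\eqref{eq:ehg+} collapses to $n^+(H, G(\bd)) = O\!\left(\frac{R^- R^+}{\aut(H)\,m}(1+Q)^{h-2}\right)$. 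Since there are $O(h^2)$ isomorphism types of theta on $h$ vertices and $\sum_{h \geq 2} h^2 (1+Q)^{h-2} = O(|Q|^{-3})$ (recall $0 < 1+Q < 1$), the expected number of thetas on at most $2g(n)$ vertices is $O\!\left(\frac{R^- R^+}{m|Q|^3}\right) = O\!\left(\frac{R^- R^+}{n|Q|^3}\right) = o(1)$ by the hypothesis $nQ^3(R^-R^+)^{-1} \to -\infty$. For a figure-eight $H$ on $h$ vertices, the unique branch vertex has in/out-degree $(2,2)$ and the other $h-1$ have in/out-degree $(1,1)$; bounding $\rho_{2,2} = \frac1n\sum_i d_i^-(d_i^- - 1)d_i^+(d_i^+ - 1) \leq \Delta\cdot\frac{m}{n}R^+$ and arguing as before gives $n^+(H,G(\bd)) = O\!\left(\frac{\Delta R^+}{\aut(H)\,m}(1+Q)^{h-1}\right)$, and with $O(h)$ types on $h$ vertices and $\sum_{h\geq1}h(1+Q)^{h-1} = O(|Q|^{-2})$ the expected number of figure-eights on at most $2g(n)$ vertices is $O\!\left(\frac{\Delta R^+}{m|Q|^2}\right)$. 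Using $R^+ \leq \Delta(1+Q) = O(\Delta)$, $\Delta^2 \leq n^{1/3}$ and $|Q| = \omega(n^{-1/3})$, this is $O\!\left(\frac{\Delta^2}{n|Q|^2}\right) = o(1)$. By Markov's inequality, with high probability there is no theta and no figure-eight on at most $2g(n)$ vertices; combined with the reduction and the absence of cycles longer than $g(n)$, this shows that with high probability $G(\bd)$ has no complex component.

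The step I expect to be the main obstacle is controlling substructures whose size is not a priori bounded: the estimate from~\Cref{lem:subgraphs_ub} deteriorates once a structure has order comparable to $\sqrt m$, and the factor $(1+Q)^{h}$ is by itself too weak to tame the sum. The role of the reduction to a long internal cycle, feeding into~\Cref{lem:long_cyc} and~\Cref{lem:med_cyc}, is precisely to cap the structures at $O(g(n)) = o(\sqrt m)$ vertices. A secondary technical point is that the figure-eight bound carries an extra factor $\Delta$ coming from $\rho_{2,2}$, which has to be absorbed using $\Delta \leq n^{1/6}$ from~\Cref{cond:conditions} together with $|Q| = \omega(n^{-1/3})$.
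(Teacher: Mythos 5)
Your proposal is correct and follows essentially the same route as the paper: the same ear-decomposition reduction to the two excess-one motifs $S(a,b,c)$ and $T(a,b)$, the same use of the absence of cycles longer than $g(n)$ to cap these motifs at $2g(n)$ vertices, and the same first-moment computation via~\Cref{lem:subgraphs_ub} with $\rho_{1,2}\rho_{2,1}$ giving the $R^-R^+/(m|Q|^3)$ bound and $\rho_{2,2}\le \Delta\frac{m}{n}R^+$ giving the $\Delta R^+/(m|Q|^2)$ bound. The only (immaterial) difference is how the figure-eight term is shown to vanish: you use $R^+\le\Delta(1+Q)$ together with $|Q|=\omega(n^{-1/3})$, whereas the paper rebalances the exponents against the hypothesis $nQ^3(R^-R^+)^{-1}\to-\infty$ directly.
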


We begin by defining digraphs $S(a,b,c)$ and $T(a,b)$ for $a,b,c \in \bN$.
Let $S(a,b,c)$ be the digraph with $a+b+c-1$ vertices consisting of vertices $u$, $v$ and three internally disjoint paths.
One of length $a$ from $u$ to $v$, one of length $b$ from $u$ to $v$ and one of length $c$ from $v$ to $u$.
Let $T(a,b)$ be the digraph with $a+b-1$ vertices consisting of two cycles, one of length $a$, one of length $b$ which intersect at a single vertex, $u$.
We can use the ear decomposition of a strongly connected digraph to deduce that if $G(\bd)$ contains any complex components, then it contains a subgraph which is either a copy of $S(a,b,c)$ or a copy of $T(a,b)$.
Note that both of these are the union of  two cycles and and by the results of the previous two sections, there are no cycles with more than $g(n)$ vertices with high probability.
Thus we only need to show there are none of these motifs on at most $2g(n)$ vertices to deduce that there are none in $G(\bd)$.
 
 Unlike \L{}uczak and Seierstad~\cite{LS}, we must treat these cases separately as in the first case, we have two vertices of total degree $3$ and the rest of total degree $2$ and in the second there is one vertex of total degree $4$ in place of the total degree $3$ vertices which changes the result of applying~\Cref{lem:subgraphs_ub}.
 
 First let us consider $S(a,b,c)$. There are at most $h^2 h!$ ways of finding such subgraphs on $h$ vertices ($\leq h^2$ ways of choosing path lengths connecting the two degree $3$ vertices and assuming the associated automorphism groups are all trivial gives this bound).
 Thus, we may apply~\Cref{lem:subgraphs_ub} to deduce that the expected number of such subgraphs in the configuration model with parameters as in the statement of~\Cref{thm:subcrit_main_thm} is at most
 \begin{equation}\label{eq:complex_expectation_2deg3}
  \sum_{h=1}^{2g(n)} \frac{n^h}{(m)_{h+1}} h^2 h! \mu_{1,1}^{h-2} \rho_{1,2} \rho_{2,1}
  = \frac{R^- R^+}{m} \sum_{h=1}^{2g(n)} \frac{m^{h+1}}{(m)_{h+1}} h^2 (1+Q)^{h-2}
  \leq \frac{R^- R^+}{m} \int_0^{2g(n)} x^2 e^{\frac{xQ}{2}} dx,
 \end{equation} 
 where we eliminate the term $\frac{m^{k+1}}{(m)_{k+1}}$ in the above in the same way as in~\eqref{eq:med_cyc_ub}.
 An integral of the form seen in~\eqref{eq:complex_expectation_2deg3} can be evaluated by integrating by parts twice to deduce the following (where $t>0$),
 \begin{equation*}
  \int_0^y x^2 e^{-tx} = \frac{2}{t^3} - e^{-ty} \bbl \frac{2}{t^3}+\frac{2y}{t^2} + \frac{2y^2}{t} \bbr \leq \frac{2}{t^3}.
 \end{equation*}
Thus the expected number of these subgraphs in $G(\bd)$ can be bounded above by $\frac{16 R^- R^+}{m|Q|^3} \to 0$.
 
 The second case is $T(a,b)$ where we have a degree $4$ vertex.
 In this case there are at most $h h!$ such subgraphs on $h$ vertices ($\leq h$ choices of the two cycle lengths and assuming the associated automorphism groups are all trivial gives this bound).
Again we apply~\Cref{lem:subgraphs_ub} to compute the expected number of such subgraphs of size at most $2g(n)$ which this time is at most
  \begin{equation}\label{eq:complex_expectation_deg4}
  \sum_{h=1}^{2g(n)} \frac{n^h}{(m)_{h+1}} h \mu_{1,1}^{h-1}\rho_{2,2} 
  \leq \frac{R^+ \Delta}{m} \sum_{h=1}^{2g(n)} \frac{m^{h+1}}{(m)_{h+1}}h e^{-hQ} 
  \leq \frac{R^+\Delta}{m} \int_0^{2g(n)} x e^{-\frac{xQ}{2}} dx 
 \end{equation}
 Note we may pick either $\rho_{2,2,} \leq \frac{m}{n} R^-\Delta$ or $\frac{m}{n} R^+\Delta$ here by selecting which part of the product $d_i^-(d_i^--1)d_i^+(d_i^+-1)$ to bound by $\Delta$ in computing $\rho_{2,2}$ and so we pick the smaller of the two.
We may proceed similarly to before, integrating by parts which allows us to bound integrals of the form found in~\eqref{eq:complex_expectation_deg4} as
\begin{equation*}
  \int_0^y x e^{-tx} = \frac{1}{t^2} - e^{-ty} \bbl \frac{1}{t^2}+\frac{y}{t}  \bbr \leq \frac{1}{t^2}.
 \end{equation*}
So, we can bound~\eqref{eq:complex_expectation_deg4} above by $\frac{4 R^+\Delta}{m |Q|^2} \leq \frac{4 (R^+ R^-)^{2/3}}{m^{2/3} \zeta^{1/3}|Q|^2}\frac{\Delta}{m^{1/3}}  \to 0$.
Thus by Markov's inequality there are no copies of $S(a,b,c)$ or $T(a,b)$ on at most $2g(n)$ vertices in $G(\bd)$ with high probability.
Combining~\Cref{lem:long_cyc} and~\Cref{lem:med_cyc} we deduce there are no cycles of length at least $g(n)$ with high probability.
As any complex strongly connected digraph contains a copy of at least one of these, we deduce that $G(\bd)$ contains no complex components with high probability.


%

\subsection{Length of the kth largest cycle}
\begin{lemma}\label{lem:kth_cyc}
The $kth$ longest cycle in the configuration model, $G(\bd)$ follows the distribution given in~\Cref{thm:subcrit_main_thm}
\end{lemma}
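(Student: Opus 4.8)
The plan is to reduce the statement to a Poisson limit theorem for the number $W$ of short cycles of length at least $\alpha/|Q|$, and then to establish that limit via the Chen--Stein method. By \Cref{lem:long_cyc}, \Cref{lem:med_cyc} and \Cref{lem:com_com}, with high probability every strongly connected component of $G(\bd)$ is either an isolated vertex or a cycle of length at most $g(n)$; consequently, with high probability $|\cC_k| \ge \alpha/|Q|$ holds if and only if $G(\bd)$ contains at least $k$ cycles of length at least $\alpha/|Q|$. Writing $W$ for the number of cycles of length in $[\alpha/|Q|,g(n)]$, this gives $\bP(|\cC_k|\ge \alpha/|Q|) = \bP(W \ge k) + o(1)$, so it suffices to prove that $W$ converges in distribution to a Poisson random variable of mean $\xi_\alpha$. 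Here we are free to impose additionally $g(n) = o(n^{1/3})$ and $g(n) = o(n|Q|)$, both consistent with $g(n) = \omega(1/|Q|)$ since $n|Q|^3 \to \infty$ forces $|Q| = \Omega(n^{-1/3})$.

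First I would pin down the first moment. Write $W = \sum_{\gamma\in\Gamma} I_\gamma$, where $\Gamma$ is the set of potential directed cycles on subsets of $[n]$ of size in $[\alpha/|Q|,g(n)]$ and $I_\gamma$ is the indicator that $\gamma\subseteq G(\bd)$. Combining the upper bound of \Cref{lem:subgraphs_ub} with the matching lower bound of \Cref{lem:cycles_lb}, and using that $m^h/(m)_h = 1+o(1)$ and $h^2\Delta^2/n,\, h\Delta^2/m = o(1)$ uniformly for $h\le g(n)$ (which follows from the bound $\Delta\le n^{1/6}\log^{-1/4}n$ of \Cref{cond:conditions} and $g(n) = o(n^{1/3})$), the expected number of cycles of length exactly $h$ is $(1+o(1))(1+Q)^h/h$. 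Summing over $h\in[\alpha/|Q|,g(n)]$ and comparing the sum to an integral via the substitution $x=-Qh$ exactly as in~\eqref{eq:cycle_expectation} gives $\lambda_n := \bE(W) \to \int_\alpha^\infty e^{-x}x^{-1}\,dx = \xi_\alpha$, since $|Q|g(n)\to\infty$.

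Next I would run the Chen--Stein method with the local dependence structure in which the neighbourhood of $\gamma\in\Gamma$ is $B_\gamma = \{\gamma\}\cup\{\gamma'\in\Gamma : \gamma'\cap\gamma\neq\emptyset\}$, so that $\dtv(\cL(W),\mathrm{Poi}(\lambda_n))$ is bounded above by an absolute constant times $b_1 + b_2 + b_3$, where $b_1 = \sum_\gamma\sum_{\gamma'\in B_\gamma} \bE(I_\gamma)\,\bE(I_{\gamma'})$, $b_2 = \sum_\gamma\sum_{\gamma'\in B_\gamma\setminus\{\gamma\}}\bE(I_\gamma I_{\gamma'})$, and $b_3 = \sum_\gamma \bE\bigl|\,\bE(I_\gamma\mid\sigma(I_{\gamma'}:\gamma'\notin B_\gamma)) - \bE(I_\gamma)\,\bigr|$. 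For $b_1$, the expected number of cycles of length at most $g(n)$ through a fixed vertex is $O(1/(n|Q|))$, so $\sum_{\gamma'\in B_\gamma}\bE(I_{\gamma'}) = O(g(n)/(n|Q|))$ and hence $b_1 = O(\lambda_n g(n)/(n|Q|)) = o(1)$ using $g(n) = o(n|Q|)$. For $b_2$, every ordered pair of distinct cycles sharing a vertex, each of length at most $g(n)$, induces a complex strongly connected digraph on at most $2g(n)$ vertices, so $b_2$ is at most the expected number of such configurations, which is $o(1)$ by a minor variant of the estimates~\eqref{eq:complex_expectation_2deg3} and~\eqref{eq:complex_expectation_deg4}.

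The term $b_3$ is the main obstacle: unlike in a product space, conditioning on far-away structure in the configuration model genuinely affects the residual matching, so $b_3$ does not vanish for free. The plan here is to observe that conditioning on $I_\gamma=1$ is the same as passing to a configuration model on $m-|\gamma|$ stubs — one in-stub and one out-stub deleted at each vertex of $\gamma$ — whose parameters $Q,R^-,R^+$ differ from the original ones by a factor $1+O(|\gamma|\Delta/m) = 1+o(1)$ for $|\gamma|\le g(n)$; applying \Cref{lem:subgraphs_ub} and \Cref{lem:cycles_lb} in this reduced model shows $\bP(I_{\gamma'}=1\mid I_\gamma=1) = (1+o(1))\bP(I_{\gamma'}=1)$ uniformly over $\gamma'$ disjoint from $\gamma$. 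Since $\sum_{\gamma'}\bE(I_{\gamma'}) = \lambda_n = O(1)$, a typical realization has only $O(1)$ cycles of length at least $\alpha/|Q|$, and a Bonferroni/inclusion--exclusion argument over this $O(1)$-sized pattern (together with the bounded total weight of the ``absences'') upgrades the one-cycle estimate to $\bP(I_\gamma=1\mid\mathcal{A}) = (1+o(1))\bP(I_\gamma=1)$ for a typical atom $\mathcal{A}$ of $\sigma(I_{\gamma'}:\gamma'\notin B_\gamma)$; the atypical atoms carry probability $o(1)$ and, using the crude bound $\bE(W^2) = O(1)$ (which follows from $b_2 = o(1)$ and the disjoint-pair case of \Cref{lem:subgraphs_ub}), contribute $o(1)$ to $b_3$ via Cauchy--Schwarz. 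An alternative that avoids some of this bookkeeping is to prove directly that the indicators $(I_\gamma)$ are negatively related — conditioning on one cycle being present can only remove others, since it consumes stubs — and invoke the Chen--Stein bound for negatively related indicators, which reduces to $\dtv(\cL(W),\mathrm{Poi}(\lambda_n)) = O(\sum_\gamma \bE(I_\gamma)^2) \to 0$. Either way $b_1+b_2+b_3\to 0$, so $\dtv(\cL(W),\mathrm{Poi}(\xi_\alpha))\to 0$, whence $\bP(W\ge k)\to 1 - \sum_{i=0}^{k-1}\frac{\xi_\alpha^i}{i!}e^{-\xi_\alpha}$; combined with the reduction in the first paragraph this completes the proof.
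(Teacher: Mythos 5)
Your reduction to a Poisson limit for the number of cycles of length in $[\alpha/|Q|,g(n)]$, your first-moment computation combining \Cref{lem:subgraphs_ub} with \Cref{lem:cycles_lb}, and your treatment of the terms $b_1$ and $b_2$ all match the paper's strategy in spirit (though for $b_2$ the paper needs more than ``a minor variant'' of \eqref{eq:complex_expectation_2deg3}--\eqref{eq:complex_expectation_deg4}: it sums over all strongly connected multi-digraph unions of two cycles with degree-dependent multiplicities, which is what \Cref{lem:count_scdegseq} and \Cref{lem:27k_cycle_union} are for). The genuine gap is in the dependence term, which is the crux of the whole lemma. Your alternative route via negative relation is false: for two vertex-disjoint (principal) cycles of lengths $h,h'$ one has $\bP(C\cap C') = 1/(m)_{h+h'} > 1/((m)_h(m)_{h'}) = \bP(C)\bP(C')$, so disjoint cycle indicators are pairwise \emph{positively} correlated and the family cannot be negatively related. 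Indeed the paper's coupling establishes exactly the opposite monotonicity, $\widetilde{W}_C^1 \geq W_C^1$: conditioning on a cycle can only \emph{add} vertex-disjoint cycles under the natural switching coupling, and the content of the $\kappa_C$ computation is to show this positive dependence is quantitatively negligible.

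Your primary route for $b_3$ also does not close. The Arratia--Goldstein--Gordon $b_3$ requires controlling $\bE\left|\bE(I_\gamma\mid\sigma(I_{\gamma'}:\gamma'\notin B_\gamma))-\bE(I_\gamma)\right|$, i.e.\ conditioning on the joint configuration of exponentially many far-away indicators including all their absences; estimating $\bP(I_{\gamma'}=1\mid I_\gamma=1)$ is a single-event conditioning in the wrong direction, and the proposed Bonferroni step over ``typical atoms'' is not an argument. Moreover the premise that conditioning on $I_\gamma=1$ is the same as a configuration model on $m-|\gamma|$ stubs is only exact when one conditions on a specific partial matching of stubs, not on vertex-level containment of $\gamma$ through vertices of degree greater than one. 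The paper sidesteps both problems: it uses the \emph{local coupling} form of Chen--Stein (\Cref{thm:localcouplingstein}), whose third term is $p_C\,\bE|\widetilde{W}_C^1-W_C^1|$ for an explicitly constructed coupling rather than a conditional expectation against the far $\sigma$-algebra, and it works throughout with \emph{principal} (stub-level) cycles precisely so that the switching construction of \Cref{lem:bipswitching} realises $\cL(W_C\mid X_C=1)$ exactly. You would need to supply some equivalent of this coupling-plus-principal-subgraph machinery for your argument to go through.
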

The idea now will be to apply a local coupling version of the Chen-Stein method (see~\cite[Theorem 2.8]{TE}) to deduce that the number of cycles in $G(\bd)$ of length between $\alpha/|Q|$ and $g(n)$ converges to a Poisson distribution of mean $\xi_\alpha$.
Let us start by stating the version of the Chen-Stein method which we will apply,
\begin{theorem}\label{thm:localcouplingstein}
Let $W = \sum_{i \in \Gamma} X_i$ be a sum of indicator variables and let $p_i:=\bE(X_i)$.
For each $i \in \Gamma$, divide $\Gamma \sm \{i\}$ into two sets, $\Gamma_i^s$ and $\Gamma_i^w$.
Define
\begin{equation*}
 Z_i := \sum_{j \in \Gamma_i^s} X_j\; \text{ and }\; W_i := \sum_{j \in \Gamma_i^w} X_j.
\end{equation*}
Suppose that there exist random variables, $W_i^1$ and $\widetilde{W}_i^1$ defined on the same probability space such that
\begin{equation*}
 \cL(\widetilde{W}_i^1) = \cL(W_i|X_i=1)\; \text{ and }\; \cL(W_i^1) = \cL(W_i).
\end{equation*}
Then,
\begin{equation}\label{eq:stein_bound}
 \dtv(W,Po(\bE(W))) \leq \min(1,\bE(W)^{-1}) \sum_{i \in \Gamma} \left( p_i \bE(X_i+Z_i) + \bE(X_iZ_i) + p_i \bE|\widetilde{W}_i^1-W_i^1| \right)
\end{equation}
\end{theorem}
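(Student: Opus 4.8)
The plan is to prove this via the generator (Stein equation) method for Poisson approximation, combined with the local decomposition $W=X_i+Z_i+W_i$ and the supplied coupling. Set $\lambda:=\bE(W)$. For each $A\subseteq\bZ_{\ge 0}$ let $g_A\colon\bZ_{\ge 0}\to\bR$ be the solution of the Poisson--Stein equation $\lambda g_A(j+1)-jg_A(j)=\ind_A(j)-Po(\lambda)(A)$ with $g_A(0)=0$. I would invoke the standard bounds on these solutions (as used in~\cite{TE}), which give the uniform Stein-factor estimate $\|\Delta g_A\|:=\sup_{j\ge 0}|g_A(j+1)-g_A(j)|\le\min(1,\lambda^{-1})$. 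Since $\dtv(W,Po(\lambda))=\sup_A\big|\bE[\lambda g_A(W+1)-Wg_A(W)]\big|$, the task reduces to bounding $\big|\bE[\lambda g(W+1)-Wg(W)]\big|$, for a fixed such solution $g=g_A$, by $\|\Delta g\|$ times the bracketed sum appearing in~\eqref{eq:stein_bound}; one then substitutes $\|\Delta g\|\le\min(1,\lambda^{-1})$ and takes the supremum over $A$.

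For the main estimate I would expand $Wg(W)=\sum_{i\in\Gamma}X_ig(W)$ and use that, for an indicator $X_i$, $\bE[X_ig(W)]=p_i\bE[g(W)\mid X_i=1]=p_i\bE[g(1+Z_i+W_i)\mid X_i=1]$, while $\bE[\lambda g(W+1)]=\sum_ip_i\bE[g(W+1)]$, so that
\[
\bE[\lambda g(W+1)-Wg(W)]=\sum_{i\in\Gamma}p_i\Big(\bE[g(W+1)]-\bE[g(1+Z_i+W_i)\mid X_i=1]\Big).
\]
For each $i$ I would interpolate between the two terms by four replacements, each controlled by the Lipschitz estimate $|g(a)-g(b)|\le\|\Delta g\|\,|a-b|$ for nonnegative integers $a,b$ (all the quantities here are sums of indicators, hence nonnegative): (i) since $W+1=(W_i+1)+X_i+Z_i$, replacing $g(W+1)$ by $g(W_i+1)$ costs at most $\|\Delta g\|\,\bE(X_i+Z_i)$; (ii) $\cL(W_i^1)=\cL(W_i)$ lets me rewrite $\bE[g(W_i+1)]=\bE[g(W_i^1+1)]$; (iii) replacing $g(1+Z_i+W_i)$ by $g(1+W_i)$ under the conditional law costs at most $\|\Delta g\|\,\bE(Z_i\mid X_i=1)$, and multiplying through by $p_i$ turns this into $\|\Delta g\|\,\bE(X_iZ_i)$; (iv) $\cL(\widetilde{W}_i^1)=\cL(W_i\mid X_i=1)$ lets me rewrite $\bE[g(1+W_i)\mid X_i=1]=\bE[g(\widetilde{W}_i^1+1)]$. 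After (i)--(iv), the residual contribution of the $i$-th summand is $p_i\big(\bE[g(W_i^1+1)]-\bE[g(\widetilde{W}_i^1+1)]\big)$, and since $W_i^1$ and $\widetilde{W}_i^1$ live on a common probability space this is at most $p_i\|\Delta g\|\,\bE|W_i^1-\widetilde{W}_i^1|$ in absolute value. Summing the four bounds over $i\in\Gamma$ and pulling out $\|\Delta g\|\le\min(1,\lambda^{-1})$ (which does not depend on $i$) gives exactly~\eqref{eq:stein_bound}.

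There is no serious obstacle here — the statement is a standard repackaging of the local Chen--Stein method — and the only genuinely external ingredient is the Stein-factor bound $\|\Delta g_A\|\le\min(1,\lambda^{-1})$, which I would cite rather than reprove; everything else is the triangle inequality together with the identity $\bE[X_i\Psi]=p_i\bE[\Psi\mid X_i=1]$ for an indicator $X_i$. The one point needing care, and the reason the statement is phrased through a coupling, is that $W_i$ is \emph{not} assumed independent of $X_i$: this forces steps (ii) and (iv) to be carried out as two separate changes of law (through $W_i^1$ and $\widetilde{W}_i^1$ respectively) rather than a single unconditioning, and it is precisely the discrepancy between those laws, measured by $\bE|W_i^1-\widetilde{W}_i^1|$, that survives in the final bound. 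In the classical independent-neighbourhood case one has $\cL(W_i\mid X_i=1)=\cL(W_i)$, the coupling can be chosen so that $W_i^1=\widetilde{W}_i^1$ almost surely, and that term vanishes, recovering the usual local Chen--Stein inequality; I would include this as a remark. Finally, the supremum over $A$ is harmless because the Stein-factor bound is uniform in $A$.
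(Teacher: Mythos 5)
Your proof is correct, but there is nothing in the paper to compare it against: the paper does not prove this statement at all, it simply quotes it as an external result (\cite[Theorem 2.8]{TE}) and immediately applies it. So your proposal supplies a derivation that the paper deliberately omits. The derivation itself is the standard generator-method argument and checks out: writing $\lambda=\bE(W)$, using $\dtv(W,Po(\lambda))=\sup_A\big|\bE[\lambda g_A(W+1)-Wg_A(W)]\big|$, the indicator identity $\bE[X_i\,g(W)]=p_i\,\bE[g(1+Z_i+W_i)\mid X_i=1]$, and the decomposition $W=X_i+Z_i+W_i$, your four replacement steps produce exactly the three summands $p_i\bE(X_i+Z_i)$, $\bE(X_iZ_i)$ and $p_i\bE|\widetilde W_i^1-W_i^1|$, each multiplied by the Lipschitz constant of $g_A$, and the steps using equality of laws (ii) and (iv) are legitimate because expectations of a fixed bounded function depend only on the law, while the residual term correctly exploits that $W_i^1$ and $\widetilde W_i^1$ share a probability space. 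The one external ingredient, the uniform Stein factor $\sup_j|g_A(j+1)-g_A(j)|\le(1-e^{-\lambda})/\lambda\le\min(1,\lambda^{-1})$, is standard and reasonable to cite rather than reprove, just as the paper cites the whole theorem; your closing remark about the independent-neighbourhood case, where the coupling can be taken with $W_i^1=\widetilde W_i^1$ almost surely, is also accurate.
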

Note that this lemma requires us to have a copy of $W_i|X_i = 1$.
To create such a copy, we will use the following lemma to couple the configuration model with itself conditioned on the containment of a given subgraph.
\begin{lemma}\label{lem:bipswitching}
 Let $G=(A \cup B, E)$ be a balanced bipartite complete graph and let $\cM$ be a uniformly chosen random perfect matching of $G$.
 Suppose that $a_1, a_2, \ldots, a_k$ are distinct elements of $A$ and $b_1, b_2, \ldots, b_k$ are distinct elements of $B$.
 Then, the following procedure gives a copy of $\cM|(a_1b_1, a_2b_2, \ldots, a_kb_k \in \cM)$:
 \begin{enumerate}
  \item Sample an element $M$ from $\cM$ and set $M_0=M$.
  \item For each $i \leq k$:
  \begin{itemize}
   \item If $a_ib_i$ is an edge of $M_{i-1}$ set $M_i = M_{i-1}$.
   \item Otherwise, let $a_i'$ be the unique neighbour of $b_i$ and $b_i'$ be the unique neighbour of $a_i$ in $M_{i-1}$. Let $M_i = (M_{i-1} \sm \{a_ib_i', a_i'b_i \}) \cup \{a_ib_i, a_i'b_i' \}$.
  \end{itemize}
 \end{enumerate}
 That is $M_k$ is sampled uniformly from $\cM|(a_1b_1, a_2b_2, \ldots, a_kb_k \in \cM)$.
\end{lemma}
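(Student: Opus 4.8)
The statement to prove is Lemma~\ref{lem:bipswitching}: the switching procedure produces a uniform sample from $\cM$ conditioned on containing the edges $a_1b_1,\dots,a_kb_k$.

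\medskip

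\textbf{Proof proposal.}
The plan is to induct on $k$. For $k=0$ there is nothing to prove, so suppose the procedure applied to $a_1b_1,\dots,a_{k-1}b_{k-1}$ already yields a uniformly random element $M_{k-1}$ of the set $\cM_{k-1} := \cM\,|\,(a_1b_1,\dots,a_{k-1}b_{k-1}\in\cM)$. It then suffices to show that the single switching step at stage $k$ maps a uniform element of $\cM_{k-1}$ to a uniform element of $\cM_k := \cM\,|\,(a_1b_1,\dots,a_kb_k\in\cM)$; composing the inductive hypothesis with this step gives the claim. So the whole problem reduces to the one-step statement: given a uniformly random perfect matching $M'$ of the complete bipartite graph on $(A'\cup B', \cdot)$ obtained by deleting $a_1,b_1,\dots,a_{k-1},b_{k-1}$ from $G$ (equivalently, conditioning on those $k-1$ edges), the switch at $a_kb_k$ produces a uniform matching among those containing $a_kb_k$.

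For that one-step claim I would argue by an explicit bijection / counting argument. Write $A^\star = A'\sm\{a_k\}$, $B^\star=B'\sm\{b_k\}$; a matching in $\cM_k$ is just $\{a_kb_k\}$ together with a perfect matching of $A^\star$ to $B^\star$, so $|\cM_k| = |A^\star|!$. Now fix a target matching $N\in\cM_k$, and count the elements $M'\in\cM_{k-1}$ that the switching step sends to $N$. There are two sources: (i) $M' = N$ itself, when $a_kb_k\in M'$ and the step does nothing; (ii) matchings $M'$ with $a_kb_k\notin M'$, in which case $M'$ is recovered from $N$ by the reverse switch — pick the neighbours $a_k' = $ partner of $b_k$ and $b_k' = $ partner of $a_k$ we want in $M'$, i.e. $M' = (N\sm\{a_kb_k,a_k'b_k'\})\cup\{a_kb_k', a_k'b_k\}$ for some edge $a_k'b_k'\in N$ with $a_k'\ne a_k$, $b_k'\ne b_k$. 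Each choice of such an edge $a_k'b_k'\in N$ gives a distinct valid $M'$, and there are exactly $|A^\star| = |A'|-1$ of them (every edge of $N$ other than $a_kb_k$). Hence each $N\in\cM_k$ has exactly $|A'|$ preimages under the switching step. Since this count is the same for every $N$, and since $M'$ is uniform on $\cM_{k-1}$, the image is uniform on $\cM_k$, as required. (As a sanity check, $|\cM_{k-1}| = |A'|! = |A'|\cdot|\cM_k|$, consistent with a uniformly $|A'|$-to-$1$ map.)

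\medskip

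\textbf{Main obstacle.}
The only genuinely delicate point is verifying in case (ii) that the reverse switch is well-defined and really does land in $\cM_{k-1}$ — i.e. that no two distinct edges $a_k'b_k'\in N$ produce the same $M'$, that the resulting $M'$ is a genuine perfect matching, and that it does not accidentally already contain $a_kb_k$ or destroy one of the edges $a_jb_j$ with $j<k-1$ (it cannot, since the step only touches edges incident to $a_k$ or $b_k$, and $a_k,b_k\notin\{a_1,b_1,\dots,a_{k-1},b_{k-1}\}$ because all the $a_i$ are distinct and all the $b_i$ are distinct). Once one checks that the forward switch and the ``pick an edge of $N$'' reverse map are mutually inverse on the relevant sets, the uniform $|A'|$-to-$1$ structure falls out and the induction closes. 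I would present this as a short lemma-internal computation rather than a separate argument.
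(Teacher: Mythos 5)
Your proposal is correct and follows essentially the same route as the paper: reduce to the single-edge case by induction, then show the switching map is uniformly $n$-to-$1$ by counting preimages via the inverse switch (one preimage per edge of the target matching, including the identity case). The paper phrases the count as ``each atom of $\cM|(a_1b_1\in\cM)$ comes from precisely $n$ atoms of $\cM$,'' which matches your $1+(|A'|-1)=|A'|$ tally exactly.
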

\begin{proof}
Note that it is sufficient to prove the lemma for $k=1$ as if $\cB =\cM|(a_1b_1, a_2b_2, \ldots, a_{k-1}b_{k-1} \in \cM)$ then clearly, $\cB|(a_kb_k \in \cB) = \cM|(a_1b_1, a_2b_2, \ldots, a_kb_k \in \cM)$.
So, the general case follows by induction on the $k=1$ case.

Now, we prove the lemma for $k=1$. To do so, we will show that each of the $(n-1)!$ atoms of $\cM|(a_1b_1 \in \cM)$ comes from precisely $n$ atoms of $\cM$ via this switching approach.
So let $M$ be an atom of $\cM|(a_1b_1 \in \cM)$ and let $e = cd$ be an edge of $M$ with $c\in A, d\in B$.
Now, consider the inverse switching, $M \to (M \sm \{ab, cd \}) \cup \{ad,bc \}$ (note if we chose the edge $ab$, then this is simply the identity $M \to M$).
Thus, for each atom $M$ of $\cM|(a_1b_1 \in \cM)$ there are $n$ ways to get to an atom of $\cM$.
Similarly we can show that each atom of $\cM$ is mapped to a unique element of $\cM|(a_1b_1 \in \cM)$.
Thus, as $\cM$ has the uniform distribution, so does the random variable obtained by our procedure above.
\end{proof}
Note that this lemma does not allow us to generate the configuration model conditioned on the existence of a subgraph specified in the usual way by the locations of its vertices unless all of the involved vertices have in- and out-degrees at most $1$.
This is due to the fact that~\Cref{lem:bipswitching} allows us to condition on which pairs of stubs are connected rather than which pairs of vertices.
Instead we shall condition on the existence of \emph{principal subgraphs} which we shall define as follows.
\begin{definition}
 Let $G(\bd)$ be a configuration model random digraph with degree distribution $\bd$.
 Let $M = M(G(\bd))$ be the associated perfect matching of in- and out-stubs.
 A \emph{principal subgraph} of $G(\bd)$ is an event of the form $M' \subseteq M$ where $M'$ is a partial matching of in- and out-stubs.
\end{definition}
A similar notion was seen in~\cite{LuSz} in which their canonical events are precisely the same as our principal subgraphs.
Furthermore, note that the event corresponding to a subgraph in $G(\bd)$ may be written as a union of events corresponding to principal subgraphs.
In particular, the existence of the cycle $v_1v_2\ldots v_k$ in $G(\bd)$ can be written as the union of $\prod_{i=1}^k d_G^-(v_i) d_G^+(v_i)$ events corresponding to principal cycles.
For this reason, in places where there may be some ambiguity as to whether we are dealing with principal subgraphs or not, we shall refer to subgraphs in the usual sense as \emph{union subgraphs}. 

This leaves us in a setting to which we can apply~\Cref{thm:localcouplingstein}.
We shall show that the number of cycles in $G(\bd)$ of lengths between $\alpha/|Q|$ and $g(n)$ is Poisson distributed with mean $\xi_\alpha$.
Let $\Gamma$ be the set of all principal cycles which have lengths between $\alpha/|Q|$ and $g(n)$.
For each $C \in \Gamma$ and digraph $J$ on the same vertex set and stubs as $G(\bd)$ let $X_C(J)$ be the indicator function that $C$ is a principal subgraph of $J$.
Also, we split $\Gamma \sm \{C\}$ into a set strongly dependent on $C$ and a set weakly dependent on $C$.
Define the strongly dependent set $\Gamma_C^s$ to be the set of all principal cycles which share at least one vertex with $C$.
The weakly dependent set $\Gamma_C^w$ contains all of the other principal cycles, it is the set of principal cycles which are vertex disjoint from $C$.
Finally, we define $W_C^1$ to be $W_C$ for an independent copy $G'$ of $G(\bd)$ and $\widetilde{G}_C$ to be obtained from $G'$ by applying a $4$-cycle switching to each edge of $C$ in $G_C$ in turn and define $\widetilde{W}_C^1$ in the obvious way to be $W_C$ for the digraph $\widetilde{G}_C$.
That is,
$$
W_C = \sum_{C' \in \Gamma_C^w} X_{C'}(G') \qquad\qquad\qquad \widetilde{W}_C^1 = \sum_{C' \in \Gamma_C^w} X_{C'}(\widetilde{G}_C)
$$
These variables clearly satisfy the assumptions of~\Cref{thm:localcouplingstein} and so we must bound the expectations in the statement of the theorem to compute an upper bound on $\dtv(W,Po(\bE(W)))$.
The idea now is to reduce the whole problem to one of bounding the expected numbers of certain subgraphs being contained in $G(\bd)$, $G'$ and $\widetilde{G}_C$.
For the remainder of this section, we write $X_C$ for $X_C(G)$ and $p_C = \bE(X_C)$ unless specified otherwise.

We shall bound the three terms from~\eqref{eq:stein_bound} one by one.
First let us consider the term
\begin{equation}\label{eq:eta_defn}
\eta := \sum_{C \in \Gamma} p_C \bE(X_C + Z_C) = \sum_{C \in \Gamma} \sum_{C' \in \Gamma_C^s \cup \{ C \}} p_C p_C'
\end{equation}
Note that the set, $\{C' \in \Gamma_C^s \cup \{ C \}\}$ is the set of all cycles which share at least one vertex with $C$.
Thus, the union of $C$ and $C'$ is a strongly connected digraph (here we allow multiple edges).
In the computation of $\eta$ we will use the following generalisation of~\cite[Lemma 3.4]{MC}.
No changes are required to the proof of the lemma as presented in~\cite{MC} to generalise from digraphs to multi-digraphs.
\begin{lemma}
\label{lem:27k_cycle_union}
Each strongly connected multi-digraph $D$ with excess $k$ may be formed in at most $27^k$ ways as the union of a pair of directed cycles $C_1$ and $C_2$.
\end{lemma}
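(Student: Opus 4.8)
The plan is to reduce the count to a three-colouring of the \emph{threads} of $D$, where a thread is a maximal directed path of $D$ all of whose internal vertices have in-degree and out-degree exactly $1$; equivalently, the threads are the preimages of the edges of the heart $H = H(D)$ under the suppression of $(1,1)$-vertices, so they partition $E(D)$ and there are exactly $e(H)$ of them. First I would record two bookkeeping facts. Suppression leaves the excess unchanged, so $H$ also has excess $k$. And every vertex of $H$ has total degree at least $3$: in a strongly connected digraph every vertex has in- and out-degree at least $1$, while the vertices surviving suppression additionally have total degree different from $2$. Double counting degrees then gives $2e(H) \ge 3v(H) = 3(e(H)-k)$, hence $e(H) \le 3k$, so $D$ has at most $3k$ threads. (If $k = 0$ then $D$ is a single cycle, $H$ is empty, and the only decomposition is $C_1 = C_2 = D$, consistent with the bound; so I may assume $k \ge 1$.)

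The key structural point I would establish is that every directed cycle $C \subseteq D$ is a union of whole threads. Indeed, if $C$ uses even one edge $w_i w_{i+1}$ of a thread $w_0 w_1 \cdots w_t$, then at each internal vertex $w_j$ the cycle has in-degree and out-degree $1$ and is therefore forced to use the unique in-edge $w_{j-1} w_j$ and the unique out-edge $w_j w_{j+1}$; propagating this forwards and backwards shows $C$ contains all of $w_0 w_1, \dots, w_{t-1} w_t$. Consequently each thread is either entirely contained in $C$ or entirely disjoint from $C$.

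Given a decomposition $D = C_1 \cup C_2$ into directed cycles, I would then assign to each thread $P$ one of three states according to whether $P$ lies in $C_1$ only, in $C_2$ only, or in $C_1 \cap C_2$; the state ``neither'' is impossible because $C_1 \cup C_2 = D$ covers $P$. This assignment recovers the pair, since $C_1$ is exactly the union of the threads whose state is not ``$C_2$ only'', and symmetrically for $C_2$. Hence the map from decompositions to state-assignments is injective, and as there are at most $3k$ threads and $3$ states per thread, the number of decompositions is at most $3^{3k} = 27^k$.

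I expect the argument to be short: the only real content is the ``forced continuation at $(1,1)$-vertices'' claim together with the estimate $e(H) \le 3k$, after which the count is immediate. The one place to be mildly careful is the degenerate geometry of threads — loops at a branch vertex, threads whose two endpoints coincide, parallel threads — but none of this affects anything, since the argument uses only that the threads partition $E(D)$ and that every directed cycle respects this partition.
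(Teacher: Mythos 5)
Your proof is correct and is essentially the argument the paper invokes by citing \cite[Lemma 3.4]{MC}: pass to the heart, use the degree/excess count to get at most $3k$ threads, and observe that each thread lies in $C_1$ only, $C_2$ only, or both, so the decomposition is determined by one of at most $3^{3k}=27^k$ assignments. No gaps.
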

Define $\Xi$ to be the set of all strongly connected multi-digraphs with vertices a subset of $V(G(\bd))$ such that all vertices have degrees $d^+(v)=d^-(v)=1$ or $d^+(v)=d^-(v)=2$ with at least one vertex which has $d^+(v)=d^-(v)=2$.
Note that $\Xi$ is precisely the set of multi-digraphs which can be formed as the edge disjoint union of the two cycles $C \in \Gamma$ and $C' \in \Gamma_{C}^s$.
Furthermore, note that the excess of a multi-digraph in $\Xi$ is precisely the number of vertices which have $d^+(v)=d^-(v)=2$.
We let $\Xi_k$ be the set $\Xi_k^h := \{ F \in \Xi | |F|=h\text{ and }\excess(F)=k \}$.
Moreover, for each $F \in \Xi$ define
\begin{equation}\label{eq:count_F_fromsamecycles}
t(F) := \prod_{v\in V(F)} d^-_G(v)^{d^-_F(v)} d^+_G(v)^{d^+_F(v)}
\end{equation}
Observe that for a given $C, C'$ whose union is $F$, $t(F)$ is the number of pairs of principal cycles $\widetilde{C}, \widetilde{C}'$ which are copies of the same cycles as $C, C'$.
Thus, combining~\eqref{eq:count_F_fromsamecycles} with~\Cref{lem:27k_cycle_union} allows us to bound $\eta$ as follows,
\begin{equation}\label{eq:eta_bound1}
 \eta = \sum_{C \in \Gamma} \sum_{C' \in \Gamma_C^s \cup \{ C \}} p_C p_{C'} \leq \sum_{h=\frac{\alpha}{|Q|}}^{2g(n)} \sum_{k=1}^h \sum_{F \in \Xi_k^h} \frac{27^k t(F)}{(m-h-k)^{h+k}} 
\end{equation}
Now, let $\Lambda_k^h$ be the set of all strongly connected labelled multi-digraphs with $h$ vertices such that all vertices have degrees $d^+(v)=d^-(v)=1$ or $d^+(v)=d^-(v)=2$ with precisely $k$ vertices $v$ such that $d^+(v)=d^-(v)=2$.
This allows us to write
\begin{equation}\label{eq:Lambda_Xi_relation}
 \sum_{F \in \Xi_k^h} t(F) = \frac{1}{h!}\sum_{F \in \Lambda_k^h} \sum_{\phi:V(F) \hookrightarrow V(G)} \prod_{v \in V(F)} d^-_G(v)^{d^-_F(v)} d^+_G(v)^{d^+_F(v)},
\end{equation}
where the division by $h!$ comes from the fact that $\Lambda_k^h$ distinguishes between digraphs obtained from one-another by a permutation of the vertex set.
Arguing similarly to~\Cref{lem:subgraphs_ub}, noting that we know the degree sequence of $F$ and using $\mu_{1,1} = \frac{m}{n} (1+Q) \leq \frac{m}{n}$, we deduce that
$$
\sum_{\phi:V(F) \hookrightarrow V(G)} \prod_{v \in V(F)} d^-_G(v)^{d^-_F(v)} d^+_G(v)^{d^+_F(v)}
\leq n^h \mu_{1,1}^{h-k} \mu_{2,2}^k \leq m^h \mu_{2,2}^k\mu_{1,1}^{-k}.
$$
Substituting into~\eqref{eq:Lambda_Xi_relation} and applying~\Cref{lem:count_scdegseq} we find
\begin{align}
 \sum_{F \in \Xi_k^h} t(F) 
 \leq  \frac{k}{h+k} \frac{(h+k)!}{h!} \binom{h}{k} m^h \mu_{2,2}^k \mu_{1,1}^{-k} 
 & = \frac{k}{h+k} \binom{h+k}{2k} \frac{(2k)!}{k!} m^h \mu_{2,2}^k \mu_{1,1}^{-k} \nonumber \\
 & \leq \frac{(h+k)^{2k-1}}{(k-1)!} m^h \mu_{2,2}^k \mu_{1,1}^{-k}. \label{eq:eta_bound2}
\end{align}

To finish, we note that $((m-h-k)^{h+k}) = (1+o(1))m^{h+k}$ as $h+k = o(m^{1/2})$, this allows us to substitute the bound found in~\eqref{eq:eta_bound2} into~\eqref{eq:eta_bound1} where we deduce
\begin{align}
\eta 
& \leq (1+o(1))\sum_{h=\frac{\alpha}{|Q|}}^{2g(n)} \sum_{k=1}^h \frac{27^k}{m^{h+k}} \frac{(h+k)^{2k-1}}{(k-1)!} m^h \mu_{2,2}^k \mu_{1,1}^{-k} \nonumber \\
& \leq (1+o(1)) \frac{216 g(n) \mu_{2,2}}{m} \sum_{h=\frac{\alpha}{|Q|}}^{2g(n)} \sum_{k=0}^h \frac{108^k h^{2k} \mu_{2,2}^k}{m^k k! \mu_{1,1}^{k}}. \label{eq:eta_bound3}
\end{align}
Part of the expression in~\eqref{eq:eta_bound3} is in the form of an exponential sum.
Evaluating this sum allows us to deduce that $\eta = o(1)$.
\begin{equation}\label{eq:eta_bound4}
 \eta 
 \leq (1+o(1))\frac{216 g(n) \mu_{2,2}}{m\mu_{1,1}} \sum_{h=\frac{\alpha}{|Q|}}^{2g(n)} e^{\frac{108h^2 \mu_{2,2}}{m\mu_{1,1}}}
 \leq (1+o(1)) \frac{432 g(n)^2 \mu_{2,2}}{m \mu_{1,1}} e^{\frac{432g(n)^2 \mu_{2,2}}{m\mu_{1,1}}} = o(1),
\end{equation}
where we deduce that this is $o(1)$ in the same way as when we bound~\eqref{eq:complex_expectation_deg4}.

Next we shall consider the term,
\begin{equation}\label{eq:theta_defn}
\theta := \sum_{C \in \Gamma} \bE(X_C Z_C) = \sum_{C \in \Gamma} \sum_{C' \in \Gamma_C^s \cup \{ C \}} \bE(X_C X_C')
\end{equation}
So note that $\bE(X_CX_{C'})$ is the probability that both $C$ and $C'$ are simultaneously present.
Furthermore, note that $C \cup C'$ is a strongly connected (not necessarily simple) digraph with maximum total degree at most $4$.
Thus we can use a similar strategy to the one used to bound $\eta$.
So define $\Theta$ to be the set of all strongly connected multi-digraphs $F$ with $V(F) \subseteq V(G)$ and $\Delta(F) \leq 4$.
Furthermore for $a,b \in\bN$, define 
$$
\Theta_{a,b}^h := \{ F \in \Theta | |F|=h, n_{1,2}(F)=n_{2,1}(F)=a, n_{2,2}(F)=b \},
$$
where we define $\Theta_{0,0}^h=\emptyset$.
Define $t(F)$ for $F \in \Theta$ in the same way as~\eqref{eq:count_F_fromsamecycles}.
Then, for a given construction, $F = C \cup C'$ from a pair of principal cycles, $t(F)$ is again the number of pairs of principal cycles $\widetilde{C}, \widetilde{C}'$ which are copies of the same cycles as $C, C'$.
Thus, if we apply~\Cref{lem:27k_cycle_union} we get the following bound on $\theta$,
\begin{equation}\label{eq:theta_bound1}
 \theta = \sum_{C \in \Gamma} \sum_{C' \in \Gamma_C^s} \bE(X_CX_{C'})
 \leq \sum_{h=\frac{\alpha}{|Q|}}^{2g(n)} \sum_{a=0}^h\sum_{b=0}^h \sum_{F \in \Theta_{a,b}^h} \frac{27^k t(F)}{(m-h-a-b)^{h+a+b}} 
\end{equation}
Now, we define $\Lambda_{a,b}^h$ to be the set of all strongly connected labelled multi-digraphs $F$ with $h$ vertices such that $n_{1,1}(F) = h-2a-b$, $n_{1,2}(F) = n_{2,1}(F) = a$ and $n_{2,2}(F)=b$.
(Note $\Lambda_{0,k}^h = \Lambda_k^h$).
Thus, arguing as previously,
\begin{equation}
 \sum_{F \in \Theta_{a,b}^h} t(F) = \frac{1}{h!} \sum_{F\in \Lambda_{a,b}^h} \sum_{\phi:V(F) \hookrightarrow V(G)} \prod_{v \in V(F)} d^-_G(v)^{d^-_F(v)} d^+_G(v)^{d^+_F(v)}
\end{equation}
Again, we may argue similarly to~\Cref{lem:subgraphs_ub} to deduce that for $F \in \Lambda_{a,b}^h$,
\begin{equation*}
 \sum_{\phi:V(F) \hookrightarrow V(G)} \prod_{v \in V(F)} d^-_G(v)^{d^-_F(v)} d^+_G(v)^{d^+_F(v)}
\leq m^h \mu_{1,2}^a \mu_{2,1}^a \mu_{2,2}^b \mu_{1,1}^{-(2a+b)}.
\end{equation*}
We can substitute this into~\eqref{eq:theta_bound1} and apply~\Cref{lem:count_scdegseq} to find
\begin{align}
 \sum_{F \in \Theta_{a,b}^h} t(F) & \leq \frac{3a+2b}{h+a+b} \frac{(h+a+b)!}{h!} \binom{h}{a,a,b} m^h \mu_{1,2}^a \mu_{2,1}^a \mu_{2,2}^b \mu_{1,1}^{-(2a+b)} \nonumber\\
 & \leq \frac{3a+2b}{h} 2^{a+b} \frac{h^{3a+2b}}{a!a!b!}m^h \mu_{1,2}^a \mu_{2,1}^a \mu_{2,2}^b \mu_{1,1}^{-(2a+b)}\nonumber\\ 
 & =  \frac{3a+2b}{h} \frac{1}{a!a!b!} \bbl\frac{2 h^3\mu_{1,2} \mu_{2,1}}{\mu_{1,1}^2}\bbr^a \bbl\frac{2 h^2\mu_{2,2}}{\mu_{1,1}}\bbr^b m^h\label{eq:tF_bound}
\end{align}
Note that this bound is a sum of two terms due to the factor $3a+2b$ in~\eqref{eq:tF_bound}.
So we will split this bound into $t_a(F)+t_b(F)$ in the obvious way.
This allows us to bound $\theta \leq \theta_a + \theta_b$ by only considering the $t_a(F)$ or $t_b(F)$ terms which will be convenient for us.
We substitute the bound from~\eqref{eq:tF_bound} into~\eqref{eq:theta_bound1} and use that $(m-h-a-b)^{h+a+b} = (1+o(1))m^{h+a+b}$ to deduce
\begin{align}
 \label{eq:theta_a} \theta_a & \leq (1+o(1)) \sum_{h=\frac{\alpha}{|Q|}}^{2g(n)} \sum_{a=0}^h\sum_{b=0}^h  \frac{3a}{h} \frac{1}{a!a!b!} \bbl\frac{2\cdot 27^2 h^3\mu_{1,2} \mu_{2,1}}{m \mu_{1,1}^2}\bbr^a \bbl\frac{54 h^2\mu_{2,2}}{m \mu_{1,1}}\bbr^b \\
 \label{eq:theta_b} \theta_b & \leq (1+o(1)) \sum_{h=\frac{\alpha}{|Q|}}^{2g(n)} \sum_{a=0}^h\sum_{b=0}^h  \frac{2b}{h} \frac{1}{a!a!b!} \bbl\frac{2\cdot 27^2 h^3\mu_{1,2} \mu_{2,1}}{m \mu_{1,1}^2}\bbr^a \bbl\frac{54 h^2\mu_{2,2}}{m \mu_{1,1}}\bbr^b
\end{align}
Note that in both cases we can evaluate the two inner sums in terms of exponential functions and modified Bessel functions of the first kind.
However the latter of these is a little difficult to work with, hence we will replace the $a!a!$ with $(2a)!$ allowing us to use hyperbolic trigonometric functions instead.
In particular, we have
$$
\frac{1}{a!a!} \leq \frac{4^a}{(2a)!}\;\text{ and }\; \frac{1}{a!(a-1)!} \leq \frac{4^a}{(2a-1)!}.
$$
Looking first at $\theta_a$ we get the bound
\begin{align}
 \theta_a & \leq (1+o(1)) \sum_{h=\frac{\alpha}{|Q|}}^{2g(n)} \sum_{a=0}^h\sum_{b=0}^h  \frac{3}{h} \frac{1}{(2a-1)!b!} \bbl\frac{8\cdot 27^2 h^3\mu_{1,2} \mu_{2,1}}{m \mu_{1,1}^2}\bbr^a \bbl\frac{54 h^2\mu_{2,2}}{m \mu_{1,1}}\bbr^b \nonumber \\
 & = (1+o(1))162\sqrt{2} \sum_{h=\frac{\alpha}{|Q|}}^{2g(n)} \sqrt{\frac{h \mu_{1,2} \mu_{2,1}}{m \mu_{1,1}^2}} \sinh\left(54 \sqrt{\frac{2h^3 \mu_{1,2} \mu_{2,1}}{m \mu_{1,1}^2}} \right) e^{\frac{54 h^2 \mu_{2,2}}{m \mu_{1,1}}} \nonumber \\
 & \leq (1+o(1))648 \sqrt{\frac{g(n)^3 \mu_{1,2} \mu_{2,1}}{m \mu_{1,1}^2}} \sinh\left(216 \sqrt{\frac{g(n)^3 \mu_{1,2} \mu_{2,1}}{m \mu_{1,1}^2}} \right) e^{\frac{216 g(n)^2 \mu_{2,2}}{m \mu_{1,1}}}, \label{eq:theta_a2}
\end{align}
where the final inequality follows from the fact that $\sqrt{x}$, $\sinh(x)$ and $e^x$ are all increasing for $x>0$.
Bounding $\theta_b$ is similar,
\begin{align}
 \theta_b & \leq (1+o(1)) \sum_{h=\frac{\alpha}{|Q|}}^{2g(n)} \sum_{a=0}^h\sum_{b=1}^h  \frac{2}{h} \frac{1}{(2a)!(b-1)!} \bbl\frac{8\cdot 27^2 h^3\mu_{1,2} \mu_{2,1}}{m \mu_{1,1}^2}\bbr^a \bbl\frac{54 h^2\mu_{2,2}}{m \mu_{1,1}}\bbr^b \nonumber \\
 & = (1+o(1)) 108 \sum_{h=\frac{\alpha}{|Q|}}^{2g(n)} \frac{h \mu_{2,2}}{m\mu_{1,1}} \cosh\left( 54\sqrt{\frac{2h^3 \mu_{1,2} \mu_{2,1}}{m \mu_{1,1}^2}} \right) e^{\frac{54h^2\mu_{2,2}}{m\mu_{1,1}}} \nonumber \\
 & \leq (1+o(1))432 \frac{g(n)^2 \mu_{2,2}}{m \mu_{1,1}} \cosh\left( 216\sqrt{\frac{g(n)^3 \mu_{1,2} \mu_{2,1}}{m \mu_{1,1}^2}} \right) e^{\frac{216g(n)^2\mu_{2,2}}{m\mu_{1,1}}} \label{eq:theta_b2}
\end{align}
Both~\eqref{eq:theta_a2} and~\eqref{eq:theta_b2} are $o(1)$ which follows from the facts that
$$
\frac{g(n)^2 \mu_{2,2}}{m \mu_{1,1}} \to 0 \qquad\text{ and }\qquad \frac{g(n)^3 \mu_{1,2} \mu_{2,1}}{m \mu_{1,1}^2} \to 0.
$$
The first of which we showed in~\eqref{eq:eta_bound4} and the latter follows directly from the assumption that $g(n)^3 = o(\frac{m}{R^-R^+})$.
Hence $\theta = o(1)$.
Finally, we will bound the terms,
\begin{equation}\label{eq:kappa_defn}
\kappa_C :=  \bE|\widetilde{W}_C^1 - W_C^1|.
\end{equation}
We note that unlike in bounding $\eta$ and $\theta$ we will not need to look at the sum over $C \in \Gamma$ in computing the bound.
This is due to the fact that $\kappa_C$ depends on a much more global structure than the terms $p_C \bE(X_C+Z_C)$ and $\bE(X_C Z_C)$.
For the bounding of $\kappa_C$, the first thing to observe is that due to the choice of coupling and $\Gamma_C^s$ we have $\widetilde{W}_C^1 \geq W_C^1$.
This is because all of the edges which are removed by the switchings are incident to a vertex of $C$ and therefore none of the principal cycles containing these edges are in $\Gamma_C^w$ which are the cycles contributing to $\widetilde{W}_C^1 \geq W_C^1$.
This enables us to bound $\kappa_C$ by computing the number of expected cycles from $\Gamma_C^w$ which are added by the switchings.

In order to add a cycle $C'$ with the switchings over $4$-cycles which produce $\widetilde{G}_C$ from $G'$ the following must be true for some $k$,
\begin{itemize}
 \item All but $k$ edges of $C'$ must be present in $G'$.
 \item $k$ of the edges of $C$ must add these edges after applying the switching.
\end{itemize}
Now let us compute the probability that this event comes to pass.
So let us fix $k$ edges missing from $C'$ which we match up with $k$ of the edges of $C$ which will be used to add them when we apply the switching.
So if one such edge is $e=uv$ and this is matched with $e'=u'v'$, in order for the switching to add $e$, before switching we must have edges $uv'$ and $u'v$ present in $G'$ (note that because the edges are directed we do not need to consider the  alternate switching using $uu'$ and $vv'$ like we would if working with graphs).
Hence in total there are $|C'|+k$ edges which we must find in $G'$ before switching in order that $C$ is a cycle of $\widetilde{G}_C$.
The probability we find these edges is $((m)_{|C'|+k})^{-1}$.

Now, let us count how many such structures there are which produce the same union cycle as $C'$.
As $C$ is a principal cycle, we know exactly which stubs we use for it and so there is no contribution to the number of copies from vertices of $C$.
However there are $d^+(u) d^-(v)$ switchings which add the union edge $uv$ and so the number of ways in which the switching structure with a union cycle which is that of $C'$ can be found is
$$
\prod_{v \in V(C')} d^-(v) d^+(v).
$$
Thus for any fixed set of missing edges for the union cycle with the same edges as $C'$ and choice of edges from $C$ with which we add these edges, the expected number of additional principal cycles after the switchings is
\begin{equation}
 \frac{1}{(m)_{|C'|+k}} \prod_{v \in V(C')} d^-(v) d^+(v).
\end{equation}
There are $\binom{|C|}{k}$ ways to pick the edges of $C$ which we switch over to add the missing edges of $C'$.
Also there are $\binom{|C'|}{k}$ ways to pick the missing edges of $C'$ and $k!$ ways to assign edges of $C$ to missing edges of $C'$.
Thus, when we sum over principal cycles $C'$, missing edges of $C$ and $C'$ and matchings of the missing edges, we find the expected number of such structures is at most
\begin{equation}
 \bE|\widetilde{W}_C^1 - W_C^1| \leq \sum_{h = \frac{\alpha}{|Q|}}^{g(n)} \sum_{k=1}^{\min(|C|,h)} \frac{1}{h}\sum_{C' \subseteq G; |C'|=h}\binom{|C|}{k} \binom{h}{k} \frac{k!}{(m)_{h+k}} \prod_{v \in V(C')} d^-(v) d^+(v). \label{eq:kap1}
 \end{equation}
 Applying standard bounds on binomial coefficients and falling factorials and taking vertices from $V(G(\bd))$ for $|C'|$ with replacement rather than without allows us to bound~\eqref{eq:kap1} as follows
 \begin{align}
 \bE|\widetilde{W}_C^1 - W_C^1| 
 & \leq (1+o(1)) \sum_{h = \frac{\alpha}{|Q|}}^{g(n)} \sum_{k=1}^{\infty} \frac{|C|^k h^{k-1}}{k! m^k} (1+Q)^h\nonumber \\
 & \leq (1+o(1)) \sum_{|C'| = \frac{\alpha}{|Q|}}^{g(n)} \frac{e^{\frac{|C|h}{m}}-1}{|C'|} \nonumber \\
 & \leq \frac{g(n)|Q|}{\alpha} \left(e^{\frac{g(n)^2}{m}} -1\right) \leq \frac{2g(n)^3 |Q|}{\alpha m} = o(1),\label{eq:kap2}
\end{align}
where we note that $g(n)^2/m < 1$ allows us to use the bound $e^x -1 \leq 2x$ in~\eqref{eq:kap2}.
So $\kappa_C \leq \kappa = o(1)$ for all $C$ where $\kappa = \frac{2g(n)^3 |Q|}{\alpha m}$.
Finally, note that this implies $\sum_{C \in \Gamma} p_C \kappa_C \leq \kappa\bE(W)$. 
We subsequently show that $\bE(W) \leq \xi_\alpha + o(1)$ from which it follows that $\sum_{C \in \Gamma} p_C \kappa_C = o(1)$ as $\xi_\alpha$ is a constant independent of $n$.
Using this and the fact that $\eta, \theta = o(1)$, we conclude that
$$
\dtv(W, Po(\bE(W))) = o(1).
$$
Note that $W$ is the number of cycles of $G(\bd)$ of length between $\alpha/|Q|$ and $g(n)$.
By~\Cref{lem:long_cyc} and~\Cref{lem:med_cyc}, the probability that there are any longer cycles is $o(1)$.
Thus, if $W'$ is the number of cycles of $G(\bd)$ of length at least $\alpha/|Q|$, $\dtv(W',Po(\bE(W))) = \dtv(W,Po(\bE(W))) + o(1) = o(1)$.
To show that the mean of the corresponding Poisson distribution can be taken to be $\xi_\alpha$ note that $\dtv(Po(\lambda),Po(\mu)) \leq |\lambda-\mu|$ holds for all $\lambda, \mu \geq 0$.
This follows from coupling $\Bin(n, \lambda/n)$ and $\Bin(n, \mu/n)$ by coupling their constituent Bernoulli trials and noting that $\dtv(\Bin(n,\lambda/n),Po(\lambda)) = o_n(1)$.
Thus it suffices to show that $\bE(W) = \xi_\alpha + o(1)$.
Upper bounding $\bE(W)$ may be done in the same way as the proof of~\Cref{lem:med_cyc}.
However, we can be more precise than we are in~\eqref{eq:cycle_expectation} as in this case $2h^2/m = o(1)$, so this line can be replaced by
\begin{equation}\label{eq:eW_ub}
 \sum_{h=\frac{\alpha}{|Q|}}^{g(n)}  \frac{e^{hQ+ \frac{2h^2}{m}}}{h}
 \leq \sum_{h=\frac{\alpha}{|Q|}}^{g(n)}  \frac{e^{hQ+o(1)}}{h}
 \leq (1+o(1)) \int_{\frac{\alpha}{|Q|}}^\infty \frac{e^{hQ}}{h} dh
 = (1+o(1)) \int_{\alpha}^\infty \frac{e^{-\lambda}}{\lambda} d\lambda = \xi_\alpha + o(1).
\end{equation}
Next, we lower bound $\bE(W)$ by $\xi_\alpha - o(1)$.
In order to do this we will use~\Cref{lem:cycles_lb} in combination with the inequality
\begin{equation*}
 1-x \geq e^{-x-x^2}\;\text{ for }\; x \leq \frac{1}{2}.
\end{equation*}
This allows us to deduce that the probability of finding a cycle of length between $\alpha/|Q|$ and $g(n)$ is at least
\begin{equation}\label{eq:eW_lb1}
 \sum_{h=\frac{\alpha}{|Q|}}^{g(n)} \frac{h!}{h} \binom{n}{h} \frac{(1+Q)^h}{(n)_h} \left( 1 - \frac{2h^2\Delta^2}{\eps n} \right) \left( 1 - \frac{h \Delta^2}{2m}\right) \geq \sum_{h=\frac{\alpha}{|Q|}}^{g(n)} \frac{(1+o(1))}{h} e^{h(Q - \frac{Q^2}{2})}
\end{equation}
Now, that the exponent in the above is equal to $hQ-o(1)$ follows from the assumption $g(n) = o(1/|Q|^2)$.
Thus in analogy with~\eqref{eq:eW_ub}, we deduce that~\eqref{eq:eW_lb1} is at most
\begin{equation}\label{eq:eW_lb2}
 (1+o(1))\sum_{h=\frac{\alpha}{|Q|}}^{g(n)}  \frac{e^{hQ-o(1)}}{h}
 \geq (1-o(1)) \int_{\frac{\alpha}{|Q|}}^\infty \frac{e^{hQ}}{h} dh
 = (1-o(1)) \int_{\alpha}^\infty \frac{e^{-\lambda}}{\lambda} d\lambda = \xi_\alpha - o(1).
\end{equation}
Combining~\eqref{eq:eW_ub} and~\eqref{eq:eW_lb2} we deduce that $\bE(W) = \xi_\alpha + o(1)$ as required.

\subsection{Putting it all together}

We can deduce the statement of the theorem from the previous sections as follows.
First we apply~\Cref{lem:long_cyc},~\Cref{lem:med_cyc} and~\Cref{lem:com_com} to deduce there are no cycles with length at least $\omega(1/|Q|)$ or complex components.
Then, we can apply~\Cref{lem:kth_cyc} to deduce that the distribution of the number of cycles with at least $\alpha/|Q|$ vertices converges to a Poisson distribution with mean $\xi_\alpha$ from which the statement that
$$
\bP\left(|\cC_k| \geq \frac{\alpha}{|Q|}\right) = 1-\sum_{i=0}^{k-1} \frac{\xi_\alpha^i}{i!} e^{-\xi_\alpha} +o(1)
$$
follows immediately as the above is simply the probability that a Poisson($\xi_\alpha$) distribution is at least $k$ (plus the $o(1)$ term).

\section{Concluding Remarks}\label{sec:concl}
In this paper, we showed that the largest component of the directed configuration model is of order $|Q|^{-1}$ when $nQ^3(R^-R^+)\to-\infty$ and found the distribution of the size of the $k$th largest component for any $k$.
In a subsequent paper~\cite{supercrit} we shall show that under similar conditions to those in this paper that for degree sequences such that $nQ^3(R^-R^+)^{-1} \to \infty$ the largest component is of order $nQ^2(R^-R^+)^{-1}$.
This quantity matches the $1/|Q|$ we find in this paper if $n|Q|^3(R^-R^+)^{-1} \to c$ for some constant $c$ and suggests that one may find a critical window phenomenon for degree sequences with such parameters.
Note that in particular this is precisely the critical window of $D(n,p)$ (looking at typical degree sequences from the model $D(n,p)$).
Moreover, the recent result of Donderwinkel and Xie certainly seems to indicate that the point $Q=0$ lies inside a critical window.

An interesting question for further study would be to ask about the joint distribution of the largest strongly connected components in the directed configuration model with parameters as in this paper.
The fact that we find
$$
\bP\bbl|\cC_k| \geq \frac{\alpha}{|Q|}\bbr = 1 - \sum_{i=0}^{k-1} \frac{\xi_\alpha^i}{i!} e^{-\xi_\alpha} + o(1)
$$
seems rather suggestive of an underlying Poisson process.
As such we make the following conjecture.
\begin{conjecture}
 Let $G$ be a directed configuration model with parameters as in~\Cref{thm:subcrit_main_thm} and suppose the sizes of its components in descending order are given by the random variables $Z_1 \geq Z_2 \geq \ldots$.
 Suppose further that $X_1 \leq X_2 \leq \ldots$ are the points of a Poisson process of rate $1$ and $Y_1 \geq Y_2 \geq \ldots$ are the unique positive solutions to
 $$
 X_i = \int_{Y_i}^\infty \frac{e^{-x}}{x} dx.
 $$
 Then we have that
 $$
 (|Q|Z_1,|Q|Z_2, \ldots) \to (Y_1, Y_2, \ldots)  \text{ as } n \to \infty.
 $$
\end{conjecture}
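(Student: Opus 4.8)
The plan is to lift the one-dimensional Poisson limit of \Cref{thm:subcrit_main_thm} to a convergence of the full point process of rescaled cycle lengths, and then read off all order statistics at once. Write $\Lambda(y):=\int_y^\infty\frac{e^{-x}}{x}\,dx$, a strictly decreasing bijection of $(0,\infty)$ onto itself with $\Lambda(\alpha)=\xi_\alpha$, and let $\Pi_n:=\sum_{C}\delta_{|Q|\,|C|}$ on $(0,\infty)$, the sum being over all directed cycles $C$ of $G(\bd)$. Let $\Pi$ be the Poisson point process on $(0,\infty)$ with intensity $\mu(dx)=\frac{e^{-x}}{x}\,dx$. Since $\mu((a,\infty))=\xi_a<\infty$ for $a>0$ and $\xi_a\to\infty$ as $a\to0$, the atoms of $\Pi$ form, a.s., an infinite sequence accumulating only at $0$, which written in decreasing order is $Y_1\ge Y_2\ge\cdots\to 0$; the substitution $u=\Lambda(y)$ carries $\mu$ to Lebesgue measure, so $\sum_i\delta_{\Lambda(Y_i)}$ is a rate-$1$ Poisson process whose points in increasing order are exactly the $X_i=\Lambda(Y_i)=\int_{Y_i}^\infty\frac{e^{-x}}{x}\,dx$ of the statement. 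Hence it suffices to prove $\Pi_n\Rightarrow\Pi$ vaguely on the Radon point measures of $(0,\infty)$: the functional sending a point measure to its $k$-th largest atom is continuous at $\mu$-a.e.\ realisation of $\Pi$ (a.s.\ no two atoms coincide, $Y_1<\infty$, and $Y_k>0$ for every $k$), so the continuous mapping theorem gives $(|Q|Z_1',|Q|Z_2',\dots)\Rightarrow(Y_1,Y_2,\dots)$ in the product topology, where $Z_k'$ is the length of the $k$-th longest cycle of $G(\bd)$. Moreover $Z_k=Z_k'$ w.h.p.\ for each fixed $k$: by \Cref{thm:subcrit_main_thm} every strongly connected component is, w.h.p., a cycle (of length $\le g(n)=\omega(1/|Q|)$) or an isolated vertex, while $\bP(\Pi_n((\delta,\infty))\ge k)\to\bP(\mathrm{Po}(\xi_\delta)\ge k)$ by the single-interval part of \Cref{thm:subcrit_main_thm}, and this tends to $1$ as $\delta\to0$; so for each fixed $k$ the $k$-th largest component has size at least $\delta/|Q|\to\infty$ with probability $\to1$, hence is a cycle, giving $Z_k=Z_k'$ and thus $|Q|Z_k\Rightarrow Y_k$ jointly.

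It remains to prove $\Pi_n\Rightarrow\Pi$. By Kallenberg's theorem it is enough to verify, for every finite union $A=\bigcup_{j=1}^r(a_j,b_j]$ of pairwise disjoint intervals with $0<a_j<b_j<\infty$, both $\bE\,\Pi_n((a_j,b_j])\to\xi_{a_j}-\xi_{b_j}$ for each $j$ and $\bP(\Pi_n(A)=0)\to e^{-\mu(A)}$. The first is the first-moment estimate in the proof of \Cref{lem:kth_cyc} with the range of cycle lengths restricted to $(a_j/|Q|,b_j/|Q|]$: the upper bound is \eqref{eq:eW_ub} truncated, the lower bound is \eqref{eq:eW_lb2} via \Cref{lem:cycles_lb}, and taking endpoint differences yields $\mu((a_j,b_j])=\xi_{a_j}-\xi_{b_j}$. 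For the avoidance probability, observe that $\Pi_n(A)=\sum_{C\in\Gamma_A}X_C$, where $\Gamma_A$ is the set of principal cycles whose length lies in $A/|Q|$ --- a subset of the collection of principal cycles over which the Chen--Stein argument of Section~\ref{sec:proof} runs (take $\alpha=\min_j a_j$ there, so that $\Gamma_A\subseteq\Gamma$ once $\max_j b_j<g(n)|Q|$) --- so the local-coupling Chen--Stein method (\Cref{thm:localcouplingstein}) applies with the same strong/weak split $\Gamma_C^s,\Gamma_C^w$ and the same $4$-cycle switching coupling; the three error contributions are monotone nondecreasing in the index set, hence dominated by the quantities $\eta,\theta,\sum_C p_C\kappa_C$ bounded in \eqref{eq:eta_bound4}, \eqref{eq:theta_a2}, \eqref{eq:theta_b2} and \eqref{eq:kap2}, all of which are $o(1)$. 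Thus $\dtv(\Pi_n(A),\mathrm{Po}(\bE\,\Pi_n(A)))=o(1)$, and combining this with $\bE\,\Pi_n(A)=\sum_j\bE\,\Pi_n((a_j,b_j])\to\mu(A)$ and $\dtv(\mathrm{Po}(\lambda),\mathrm{Po}(\lambda'))\le|\lambda-\lambda'|$ gives $\bP(\Pi_n(A)=0)\to e^{-\mu(A)}$.

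I do not expect any genuinely new estimate to be needed: the whole argument recycles the first-moment computations and the single-interval Chen--Stein bound of Section~\ref{sec:proof}, the point being only that those error bounds shrink when the collection of principal cycles is restricted to an arbitrary union of length-intervals. The parts that are new but routine are the soft-analysis verification that the $k$-th largest-atom functionals are a.s.\ continuous at $\Pi$ (so that the continuous mapping theorem applies) and the bookkeeping that the local coupling and its three error terms are insensitive to which length-intervals the principal cycles are drawn from. The main obstacle is therefore organisational rather than probabilistic: making precise that the Poisson approximation of \Cref{thm:subcrit_main_thm} already encodes the full point-process convergence from which the joint limit $(|Q|Z_1,|Q|Z_2,\dots)\to(Y_1,Y_2,\dots)$ follows.
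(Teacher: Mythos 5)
You are not missing the paper's proof here, because there is none: the statement you address is the paper's concluding \emph{conjecture}, explicitly left open (and noted to be open even for $D(n,p)$ in the corresponding regime). So there is nothing to compare your argument against; what you have written is a proposed resolution of an open problem, and it has to be judged on its own terms rather than as a reconstruction of an existing proof.

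On those terms, your outline is the natural attack and I do not see a step that is clearly wrong. The ingredients you invoke are indeed available in the paper: the first-moment computation of \Cref{lem:kth_cyc} (upper bound as in \eqref{eq:eW_ub}, lower bound via \Cref{lem:cycles_lb}) localises to an interval $(a/|Q|,b/|Q|]$ and yields $\xi_a-\xi_b$; the Chen--Stein error terms $\eta$, $\theta$, $\kappa_C$ are sums of nonnegative contributions over principal cycles, so restricting $\Gamma$ to $\Gamma_A\subseteq\Gamma$ only shrinks them, and the termwise monotonicity $X_{C'}(\widetilde{G}_C)\geq X_{C'}(G')$ for $C'$ vertex-disjoint from $C$ keeps the restricted $\kappa$ dominated by the one bounded in \eqref{eq:kap2}; and since two intersecting cycles would force a complex component, excluded by \Cref{lem:com_com}, every cycle of diverging length is w.h.p.\ itself a strongly connected component, which is what lets you pass from cycle lengths to component sizes for each fixed $k$. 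Nevertheless this is a sketch, not a proof: the claim that the switching coupling of \Cref{lem:bipswitching} still yields the correct conditional law for the \emph{restricted} sums $W_{C}$ over $\Gamma_C^w\cap\Gamma_A$, the verification of Kallenberg's criterion (simplicity of the limit, a dissecting ring of finite unions of intervals, the moment condition $\bE\,\Pi_n(I)\to\bE\,\Pi(I)<\infty$), the a.s.\ continuity of the $k$-th-largest-atom functionals in the vague topology, and the double limit in $\delta$ needed to conclude $Z_k=Z_k'$ w.h.p.\ are asserted rather than carried out. Two concrete points deserve explicit checks before you claim the result: first, the lower bound of \Cref{lem:cycles_lb} is what forces $\bE\,\Pi_n((a,b])\to\xi_a-\xi_b$ rather than merely an upper bound, and its error terms vanish for $h\leq b/|Q|$ only because $n|Q|^3\to\infty$ and $\Delta\leq n^{1/6}\log^{-1/4}(n)$ give $h^2\Delta^2/(\eps n)=o(1)$; second, the interchange of the limits $n\to\infty$ and $\delta\to0$ in the identification of components with cycles. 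Given that the author poses this as open, you should treat these ``routine'' steps with more suspicion than usual and write them out in full.
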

Note that the above is also an open question for $D(n,p)$ with $p = (1-\eps)/n$ and $\eps \to 0$, $\eps^3n \to \infty$.

\bibliographystyle{plain}
\bibliography{DS}
\end{document}